\newcommand{\nc}{\newcommand}
\numberwithin{equation}{section}
\newtheorem{thm}{Theorem}[section]
\newtheorem{prop}[thm]{Proposition}
\newtheorem{lem}[thm]{Lemma}
\newtheorem{cor}[thm]{Corollary}
\newtheorem{rem}[thm]{Remark}
\newtheorem{definition}[thm]{Definition}
\newtheorem{example}[thm]{Example}
\newtheorem{dfn}[thm]{Definition}
\newtheorem{constr}[thm]{Construction}
\nc{\Pc}{\bar{P}}
\nc{\cC}{\mathcal{C}}
\nc{\one}{\mathbbm{1}}
\newtheorem{claim}{Claim}[thm]
\nc{\gl}{\mathfrak{gl}}
\nc{\GL}{\mathfrak{GL}}
\nc{\g}{\mathfrak{g}}
\nc{\gh}{\widehat\g}
\nc{\h}{\mathfrak{h}}
\nc{\la}{\lambda}
\nc{\al}{\alpha }
\nc{\be}{\beta }
\nc{\ve}{\varepsilon }
\nc{\om}{\omega }
\nc{\ta}{\theta}
\nc{\veps}{\varepsilon}
\nc{\ch}{{\mathop {\rm ch}}}
\nc{\Tr}{{\mathop {\rm Tr}\,}}
\nc{\Id}{{\mathop {\rm Id}}}
\nc{\bra}{\langle}
\nc{\ket}{\rangle}
\nc{\x}{{\bf x}}
\nc{\bs}{{\bf s}}
\nc{\bp}{{\bf p}}
\nc{\bt}{{\bf t}}
\nc{\pa}{\partial}
\nc{\ld}{\ldots}
\nc{\cd}{\cdots}
\nc{\hk}{\hookrightarrow}
\nc{\T}{\otimes}
\newcommand{\bea}{\begin{equation}}
\newcommand{\ena}{\end{equation}}
\nc{\gr}{\mathrm{gr}}
\nc{\Gr}{\mathrm{Gr}}
\nc{\ov}{\overline}
\nc{\cO}{\mathcal O}
\nc{\cF}{\mathcal F}
\nc{\cL}{\mathcal L}
\nc{\msl}{\mathfrak{sl}}
\nc{\mgl}{\mathfrak{gl}}
\nc{\U}{\mathrm U}
\nc{\V}{\EuScript V}
\nc{\bH}{\EuScript H}
\nc{\Res}{\mathrm{Res\ }}
\newcommand{\bC}{{\mathbb C}}
\newcommand{\bR}{{\mathbb R}}
\newcommand{\bZ}{{\mathbb Z}}
\newcommand{\bP}{{\mathbb P}}
\newcommand{\bA}{{\mathbb A}}
\newcommand{\bG}{{\mathbb G}}
\newcommand{\fp}{{\mathfrak p}}
\newcommand{\G}{{\mathfrak G}}
\newcommand{\I}{{\mathfrak I}}
\newcommand{\fn}{{\mathfrak n}}
\newcommand{\fa}{{\mathfrak a}}
\begin{document}

\title[Birational maps, representations  and poset polytopes]
{Birational maps to Grassmannians, representations  and poset polytopes}

\author{Evgeny Feigin}
\address{Evgeny Feigin:\newline
School of Mathematical Sciences, Tel Aviv University, Tel Aviv, 69978, Israel}
\email{evgfeig@gmail.com}

\makeatletter\let\@wraptoccontribs\wraptoccontribs\makeatother
\contrib[an appendix in collaboration with]{Wojciech Samotij}
\email{samotij@tauex.tau.ac.il}

\dedicatory{To Peter Littelmann with heartfelt gratitude}

\begin{abstract}
We study the closure of the graph of the birational map from a projective space to a Grassmannian.
We provide explicit description of the graph closure and compute the fibers of 
the natural projection to the Grassmannian. We construct embeddings of
the graph closure to the projectivizations of certain cyclic representations of a degenerate special linear Lie algebra 
and study algebraic and combinatorial properties of these representations.
In particular, we describe monomial bases, generalizing the FFLV
bases. The proof relies on combinatorial properties of a new family of poset polytopes, which are of independent interest.
As a consequence we obtain flat toric degenerations of 
the graph closure studied by Borovik, Sturmfels and Sverrisd\'ottir.    
\end{abstract}

\maketitle

\section*{Introduction}
Let $\Gr(d,n)$ be the Grassmannian of $d$-dimensional subspaces in an $n$-dimensional vector space. The Grassmann variety admits a cellular decomposition into Bruhat cells with a unique open cell $\bA^N$, where $N=d(n-d)$
is the dimension of $\Gr(d,n)$. 
Hence one gets  the birational exponential map $\imath: \bP^N\to \Gr(d,n)$. 
Our main object of study
is the closure of the graph of this map, i.e.\ the variety $\G(d,n)\subset \bP^N\times \Gr(d,n)$, which is the closure of the set $(x,\imath(x))$ for $x\in\bA^N$. 
Some results about these varieties were obtained in \cite{BSS,FSS} (see also \cite{KP}
for the symplectic case).

By definition, there is a surjective map $\G(d,n)\to \Gr(d,n)$. Our first task is to describe the fibers of this map. 
To this end, let us denote by $L$ the $n$-dimensional vector space; 
any $U\in \Gr(d,n)$ is a $d$-dimensional subspace of $L$.
We fix a decomposition $L=L^-\oplus L^+$, where $\dim L^-=d$ and $\dim L^+=n-d$.
Consider the stratification
$\Gr(d,n)=\bigsqcup_{k=0}^{\min(d,n-d)} X_k$, where 
$X_k$ consists of subspaces $U\subset L$ such that $\dim (U\cap L^+) = k$
and $X_0$ is the image of the map $\imath: \bA^N\to \Gr(d,n)$. We prove the following theorem.

\begin{thm}
The map $\varphi$ is one-to-one over $X_0\sqcup X_1$. For 	
$k\ge 2$ the map $\varphi$ over  $X_k$ is a fiber bundle with
a fiber isomorphic to the projective space $\bP^{k^2-1}$. 
\end{thm}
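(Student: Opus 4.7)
The plan is to exploit the equivariance under $G := \mathrm{GL}(L^-) \times \mathrm{GL}(L^+)$ to reduce to a standard representative in each stratum, and then carry out a direct Plücker-coordinate analysis.

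\textbf{Equivariance.} $G$ acts on $L$ preserving the decomposition $L = L^- \oplus L^+$, hence on $\Gr(d,n)$ preserving the stratification, and it acts on $\Hom(L^-, L^+) \cong \bA^N$ via $(g^-, g^+) \cdot x = g^+ \circ x \circ (g^-)^{-1}$, which extends to $\bP^N$ and makes $\imath$ equivariant. Thus $G$ acts on $\G(d,n)$ and $\varphi$ is $G$-equivariant, so it suffices to understand the fibers over representatives of $G$-orbits.

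\textbf{Setup and Plücker analysis.} Fix the standard point $U_k = \spa(e_1,\ldots,e_{d-k}, e_{d+1},\ldots,e_{d+k})$ and the associated decompositions $L^- = V^- \oplus W^-$, $L^+ = W \oplus V^+$ with $V^- = \spa(e_1,\ldots,e_{d-k})$, $W^- = \spa(e_{d-k+1},\ldots,e_d)$, $W = \spa(e_{d+1},\ldots,e_{d+k})$ and $V^+ = \spa(e_{d+k+1},\ldots,e_n)$. Each $x \in \Hom(L^-, L^+)$ splits into four blocks with respect to these direct sums; denote by $B \colon W^- \to W$ the $k \times k$ corner block. Every Plücker coordinate of $\imath(x)$ is (up to sign) a minor of $x$, and in particular the distinguished Plücker coordinate $p_I$ with $I = \{1,\ldots,d-k, d+1,\ldots,d+k\}$ equals $\det B$. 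Hence $\imath(x(t)) \to U_k$ in $\Gr(d,n)$ if and only if $|\det B(t)|$ asymptotically dominates every other minor of $x(t)$.

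\textbf{Fiber computation.} For $k=0$, $\imath$ restricts to an isomorphism onto $X_0$ and the fiber is a point. For $k \geq 1$: given a nonzero $B_0$ of rank $r$ in $\Hom(W^-, W)$, use the stabilizer $\mathrm{GL}(W) \times \mathrm{GL}(W^-) \subseteq G_{U_k}$ to reduce to $B_0 = \mathrm{diag}(I_r, 0)$, and take $x(t) \in \bA^N$ supported only on the $B$-block with $B(t) = \mathrm{diag}(t^{-2}I_r, t^{-1}I_{k-r})$. A direct check of minors shows $\det B(t)$ dominates, so $\imath(x(t)) \to U_k$, while the projective limit $[x(t)] \in \bP^N$ is the point $[B_0]$ in the linear subspace $\bP(\Hom(W^-, W)) \cong \bP^{k^2-1}$ of $B$-block directions; hence $\varphi^{-1}(U_k) \supseteq \bP^{k^2-1}$. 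Conversely, the asymptotic dominance of $\det B(t)$ over all other minors forces entries outside the $B$-block to vanish in the projective limit after the correct rescaling, so $\varphi^{-1}(U_k) = \bP^{k^2-1}$, which is a single point when $k=1$.

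\textbf{Extension to $X_k$ and main difficulty.} Because $X_k$ is not a single $G$-orbit---its orbits being distinguished by the rank of the shear map $\phi \colon \pi_{L^-}(U) \to L^+/(U \cap L^+)$ attached to $U$---one repeats the Plücker analysis in a decomposition adapted to a representative of each orbit and confirms the fiber is again $\bP^{k^2-1}$; the fiber bundle structure over $X_k$ then follows from $G$-equivariance on each orbit glued via local sections on charts of $X_k$. The chief obstacle is the reverse inclusion in the previous step: comparing minors of all sizes to ensure that, after rescaling so that $\det B(t)$ has order $1$, every off-$B$-block entry of $x(t)$ tends to zero. This amounts to a combinatorial case analysis of how each minor of $x(t)$ couples $B$-block and non-$B$-block entries, which is likely where the explicit description of $\G(d,n)$ developed later in the paper must be invoked.
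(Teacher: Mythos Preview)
Your architecture---equivariance plus a fiber computation at distinguished points---matches the paper's, but the execution diverges at exactly the two places you flag as difficulties, and the paper resolves both with different tools than you suggest.

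\textbf{The reverse inclusion.} You propose to show $\varphi^{-1}(U_k)\subset\bP(\Hom(W^-,W))$ by an asymptotic comparison of minors, and you concede this is the chief obstacle. The paper does not argue asymptotically at all. Instead, for each coordinate subspace $U_J$ it writes down specific quadratic Pl\"ucker relations of the form
\[
X_{[d]\setminus\{a\}\cup\{b\}}\,X_J \;=\; \pm\delta_{a\in J}\,X_{[d]}X_{J\setminus\{a\}}\;+\;\sum_{j\in J_{>d}}\pm X_{[d]\setminus\{a\}\cup\{j\}}\,X_{J\setminus\{j\}\cup\{b\}},
\]
replaces the ``level~$1$'' Pl\"ucker coordinates $X_{[d]\setminus\{a\}\cup\{b\}}$ by the affine coordinates $Y_{a,b-1}$ (using that these agree on the open graph), and then simply evaluates at $([f],U_J)$: every $X_I$ with $I\ne J$ vanishes on $U_J$, so the relation collapses to $Y_{a,b-1}(f)=0$ whenever $a\in J_{\le d}$ or $b\notin J_{>d}$. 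This is a finite algebraic argument, not a limit argument, and it gives the containment immediately. Your asymptotic formulation (``dominance of $\det B(t)$ forces off-$B$-block entries to vanish in the projective limit'') is not obviously equivalent and, as stated, is not a proof: dominance among minors does not by itself control individual matrix entries after rescaling.

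\textbf{The transitivity problem.} You correctly observe that $G=\GL(L^-)\times\GL(L^+)$ is not transitive on $X_k$, and you propose to repeat the analysis on each $G$-orbit. The paper sidesteps this entirely by working with a different group: the unipotent group $N=(N^{(1)}\times N^{(2)})\ltimes N_d$ inside $SL_n^{(d)}$. This $N$ is transitive on each Schubert cell, so every $U\in X_k$ is $gU_J$ for some torus-fixed $U_J$. The crucial point you are missing is that the abelian factor $N_d$---which is exactly what moves you between your $G$-orbits by changing the shear---acts \emph{trivially} on the boundary $\bP(V(1))$. Hence the transport $g\cdot\varphi^{-1}(U_J)=\varphi^{-1}(gU_J)$ only involves the Levi factors $g^{(1)},g^{(2)}$, which act on $f\in\Hom(L^-,L^+)$ by $f\mapsto g^{(2)}\circ f\circ (g^{(1)})^{-1}$, and a one-line check shows this carries the conditions $\mathrm{Im}\,f\subset U_J\cap L^+$, $\mathrm{pr}^-(U_J)\subset\ker f$ to the same conditions for $gU_J$. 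This single observation replaces the case-by-case analysis you outline.

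In short: your forward inclusion by explicit curves is fine (modulo the sign of the exponents in $B(t)$, which should blow up rather than decay), but the two gaps you identify are real, and the paper closes them not by more of the same analysis but by (i) evaluating Pl\"ucker identities rather than taking limits, and (ii) enlarging the symmetry group by the abelian radical $N_d$, whose trivial action on the boundary is the key structural fact.
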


In order to study the varieties $\G(d,n)$ we consider the action of a certain
group $SL_n^{(d)}$ on $\G(d,n)$. This group is a degeneration of the classical group 
$SL_n$ depending on the parameter $d$, it contains
a normal  abelian subgroup $N_d$ (isomorphic to an abelian unipotent
radical of $SL_n$) and the maximal standard parabolic subgroup $P_d$ of $SL_n$
(corresponding to the $d$-th simple root). More precisely, 
$SL_n^{(d)}\simeq N_d\ltimes P_d$.
We denote the Lie algebra of $SL_n^{(d)}$
by $\msl_n^{(d)}$ and the (abelian) Lie algebra of $N_d$ by $\fa_d$.

\begin{constr}
For each pair $m,M\in\bZ_{>0}$  there exists an $\msl_n^{(d)}$ module
$L_{m,M}$, 
which is cyclic with respect to the action of $\fa_d$ such that $\G(d,n)$
embeds into $\bP(L_{m,M})$ as the closure of the $N_d$ orbit through the cyclic line. 
\end{constr}

In particular,  $\G(d,n)$ are the $\bG_a^N$ varieties 
(see \cite{Ar,AZ,F1,HT}).
We note that the modules $L_{m,M}$ are defined for all pairs $m,M\in\bZ_{\ge 0}$,
and for $m=0$ or $M=0$ the map $\G(d,n)\to \bP(L_{m,M})$ still exists, but 
is not an embedding. The spaces $L_{m,M}$ are responsible
for the description of the homogeneous coordinate rings of $\G(d,n)$ with respect
to the generalized Pl\"ucker embeddings and for the spaces of sections of natural
line bundles, so  we are interested in algebraic and combinatorial properties of 
$L_{m,M}$. More precisely, we want to describe $L_{m,M}$ as $\msl_n^{(d)}$ modules
(via explicit relations), find monomial bases and compute the dimensions.
     
To this end, we generalize the construction of the FFLV bases (see \cite{FFL1}). Namely,
we consider the set of roots $P$ of $\fa_d$ and for $\al\in P$ we denote by
$f_\al\in\fa_d$ the corresponding root vector. The set $P$ consists of roots 
$\al_{i,j}$ with $1\le i\le d\le j\le n-1$ and has a natural structure of 
a poset. In particular, the Dyck paths from \cite{FFL1}  are chains in $P$.
We define the set $S(m,M)\subset \bZ_{\ge 0}^P$ consisting of collections 
$(s_\al)_{\al\in P}$ subject to the conditions: 
\begin{equation}\label{P'}
\sum_{\al\in P'} s_\al\le m\cdot w(P') + M \text{ for all } P'\subset P,
\end{equation}
 where $w(P')$ is the width of the subposet $P'$ (the minimal number of chains 
 needed to cover $P'$).   We prove the following theorem.
 \begin{thm}
 The elements $\prod_{\al\in P} f_\al^{s_\al}$, $(s_\al)_\al\in S(m,M)$ being applied
 to the cyclic vector form a basis of $L_{m,M}$. 
 \end{thm}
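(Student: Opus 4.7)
\medskip

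\noindent\textbf{Proof proposal.} The plan is to follow the general three-step strategy of the FFLV basis theorem~\cite{FFL1}, adapted to the two-parameter family $L_{m,M}$ and the new width-based inequalities \eqref{P'}. Write $v\in L_{m,M}$ for the $\fa_d$-cyclic vector and, for $\bs=(s_\al)_{\al\in P}\in\bZ_{\ge 0}^P$, put $f^\bs=\prod_{\al\in P} f_\al^{s_\al}$, an element of $U(\fa_d)$ (well defined, since $\fa_d$ is abelian). The three steps are:
\begin{enumerate}
\item (\emph{Spanning}) The vectors $\{f^\bs v:\bs\in S(m,M)\}$ span $L_{m,M}$.
\item (\emph{Counting}) $|S(m,M)|=\dim L_{m,M}$.
\item (\emph{Independence}) The spanning set is linearly independent.
\end{enumerate}
Given (i) and (ii), (iii) is automatic. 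So the real work is in (i) and (ii).

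\medskip

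\noindent\emph{Step 1: defining relations and straightening.} The first task is to extract from the $\msl_n^{(d)}$-module structure of $L_{m,M}$ a collection of quadratic (and higher) straightening relations in $U(\fa_d)$ acting on $v$. Following the FFL philosophy, these should come from applying $\msl_n^{(d)}$-operators outside $\fa_d$ (Chevalley generators of $P_d$) to monomial expressions $f^\bs v$ and using the fact that $v$ is annihilated by certain subalgebras and killed in high enough degree by $m,M$-dependent relations. Concretely, for every subposet $P'\subset P$ and every integer exponent exceeding $m\cdot w(P')+M$, one expects a relation of the form
\bea\label{eq:straight}
\Bigl(\textstyle\sum_{\al\in P'} f_\al\Bigr)^{m\cdot w(P')+M+1}\cdot v'=0
\ena
for $v'$ in an appropriate weight subspace (obtained by lowering $v$ by a monomial in $f_\beta$ with $\beta\notin P'$). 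The width $w(P')$ enters through Dilworth's theorem: expanding the left hand side of \eqref{eq:straight} and projecting onto monomials supported in $P'$ gives, after resolving the antichain structure of $P'$, a relation expressing any monomial violating \eqref{P'} as a sum of lexicographically smaller monomials. Fix a homogeneous monomial order on $\bZ_{\ge 0}^P$ refining total degree (any of the standard FFL choices works). A straightforward but slightly intricate induction on this order then rewrites any $f^\bs v$ with $\bs\notin S(m,M)$ as a combination of $f^{\bs'} v$ with $\bs'<\bs$, and descending induction terminates in $S(m,M)$.

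\medskip

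\noindent\emph{Step 2: counting via poset polytope combinatorics.} The set $S(m,M)$ is, by definition, the lattice-point set of the polytope $\mathcal{S}(m,M)\subset\bR^P$ cut out by \eqref{P'}. On the other hand, $\dim L_{m,M}$ can be computed from its $P_d$-module description: the cyclic vector generates, under $P_d$, a finite-dimensional irreducible component whose dimension is a known Weyl-dimension expression, and the full $\msl_n^{(d)}$-module is its PBW-filtered version along $N_d$. Comparing the two counts reduces to a purely combinatorial identity involving the Ehrhart-type function of $\mathcal{S}(m,M)$ and a product of hook-content or binomial expressions. I would attack this by Minkowski-decomposing $\mathcal{S}(m,M)$ into a sum of simpler poset polytopes (one piece scaling with $m$, one with $M$), for which the lattice-point count is a sum over antichains matching the classical Plücker character of $\Gr(d,n)$; this is the step the appendix with Samotij is presumably needed for.

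\medskip

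\noindent\emph{Main obstacle.} The hard part is Step 1, and within it the genuinely new ingredient is the passage from chain inequalities (sufficient for the usual FFLV module) to arbitrary subposet inequalities weighted by width. In the FFLV setting only Dyck paths (chains) arise, and the required quadratic relations are of $\msl_2$/$\msl_3$ rank-2 type. Here, a subposet $P'$ of width $k$ contributes a relation whose combinatorial content is governed by a chain decomposition of $P'$ into $k$ pieces, and one has to check that the resulting family of relations is both \emph{sufficient} (to straighten every monomial into $S(m,M)$) and \emph{compatible} with the two-parameter grading by $(m,M)$. Getting the leading terms of \eqref{eq:straight} to match the width bound exactly---rather than an off-by-one or an overcounted bound---will require a careful choice of monomial order together with Dilworth's theorem to identify the width with the size of a maximal antichain in $P'$.
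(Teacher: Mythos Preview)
Your overall three-step strategy is reasonable, but both the spanning and counting steps diverge from the paper in ways that matter, and the counting step has a genuine gap.

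\medskip

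\textbf{On Step 2 (counting).} This is where your proposal breaks down. You propose to compute $\dim L_{m,M}$ independently via some Weyl-type or $P_d$-module argument and then match it against $|S(m,M)|$. But $L_{m,M}$ is not an irreducible $\msl_n$-module (it is a cyclic $\msl_n^{(d)}$-module built as $\mathrm{gr}\,L_{m\omega_d}\odot V_M$), and there is no a priori dimension formula for it; indeed, the dimension is one of the things the theorem is meant to establish. The paper never computes $\dim L_{m,M}$ independently. Instead, it uses the Minkowski property $S(m_1,M_1)+S(m_2,M_2)=S(m_1+m_2,M_1+M_2)$ directly to prove \emph{linear independence}: since $L_{m,M}=L_{1,0}^{\odot m}\odot L_{0,1}^{\odot M}$ and the monomial bases of $L_{1,0}$ and $L_{0,1}$ are essential in the sense of \cite{FFL3}, the Minkowski identity together with the machinery of favourable modules from \cite{FFL3} gives that $\{f^{\bs}\ell_{m,M}:\bs\in S(m,M)\}$ is linearly independent. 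So the appendix (the Minkowski theorem) is the input to linear independence, not to a counting argument; you have its role inverted.

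\medskip

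\textbf{On Step 1 (spanning).} Your proposed relation $\bigl(\sum_{\alpha\in P'} f_\alpha\bigr)^{m\cdot w(P')+M+1}v'=0$ does not hold in $L_{m,M}$ in general. Already for $m=1$, $M=0$, $d=2$, $n=5$, take the two-element chain $P'=\{\alpha_{1,2},\alpha_{2,3}\}$: one has $(f_{1,2}+f_{2,3})^2\ell_{\omega_2}=2f_{1,2}f_{2,3}\ell_{\omega_2}=2\,\ell_3\wedge\ell_4\neq 0$, while $w(P')=1$. The paper's relations are of a different shape: $f_{i_1,j_1}^{b_1}\cdots f_{i_r,j_r}^{b_r}\ell_{m,M}=0$ whenever $b_1+\cdots+b_r>rm+M$, proved directly from the tensor decomposition $\ell_{m,M}=\ell_{\omega_d}^{\otimes m}\otimes v_M$ using $f_{i,j}^2\ell_{\omega_d}=0$ and $f^{\bs}v_M=0$ for $|\bs|>M$. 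These are the defining relations only as an $\msl_n^{(d)}$-module; to straighten monomials one must then act by $\U(\fp_d)$, realized concretely via the derivations $\partial_\alpha$ of \cite{FFL1}. The straightening argument decomposes a union of $r$ Dyck paths into $r$ disjoint dense chains $\overline{\bp}_1,\dots,\overline{\bp}_r$ with nested starting and ending roots, and processes them one at a time so that the $\partial$-operators used for $\overline{\bp}_k$ leave the contributions of the other $\overline{\bp}_{k'}$ untouched. This careful bookkeeping, not a single power-of-a-sum relation, is what makes the width appear correctly.
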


To prove this theorem we consider the polytopes $X(m,M)\subset \bR_{\ge 0}^P$ defined by inequalities \eqref{P'} (for an arbitrary finite poset $P$).
In particular, $S(m,M)=X(m,M)\cap \bZ_{\ge 0}^P$.
The poset polytopes proved to be useful in various problems (see e.g. \cite{FaFo, FM,M2,Stur,Stan}).
We prove the following theorem.

\begin{thm}\label{Minsum}
Let $P$ be a finite poset. Then $X(m,M)$ are lattice polytopes such that
$X(m_1,M_1) + X(m_2,M_2) = X(m_1+m_2,M_1+M_2)$ and 
$S(m_1,M_1) + S(m_2,M_2) = S(m_1+m_2,M_1+M_2)$ for arbitrary  $m_1,m_2,M_1,M_2\ge 0$.
\end{thm}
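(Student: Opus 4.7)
The plan is as follows. The inclusions $X(m_1,M_1)+X(m_2,M_2)\subseteq X(m_1+m_2,M_1+M_2)$ and the analogue for $S$ are immediate: for $s^{(i)}\in X(m_i,M_i)$ and any $P'\subseteq P$ one has $\sum_{\al\in P'}(s^{(1)}_\al+s^{(2)}_\al)\le (m_1+m_2)w(P')+M_1+M_2$. The substance of the theorem lies in the reverse inclusion.

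My strategy is to establish the sharper identity
$$X(m,M)\ =\ m\cdot X(1,0)\ +\ M\cdot X(0,1),$$
where $X(1,0)$ is the chain polytope of $P$ (the general width inequalities $\sum_{\al\in P'}s_\al\le w(P')$ reduce to chain inequalities $\sum_{\al\in C}s_\al\le 1$ via Dilworth's theorem) and $X(0,1)=\{s\ge 0:\sum_\al s_\al\le 1\}$ is the standard simplex. Granting this identity, any $s\in X(m_1+m_2,M_1+M_2)$ factors as $s=u+v$ with $u\in(m_1+m_2)X(1,0)$ and $v\in(M_1+M_2)X(0,1)$; splitting each summand proportionally then yields the required Minkowski decomposition. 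The lattice polytope property follows automatically, because $X(1,0)$ and $X(0,1)$ are both lattice polytopes (with vertices given respectively by the antichain indicators and by the origin together with the coordinate vectors), and every vertex of a Minkowski sum is a sum of vertices of the summands.

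For the decomposition $s=u+v$ itself: given $s\in X(m,M)$, I would seek $u\in m\cdot X(1,0)$ minimizing $\sum_\al(s_\al-u_\al)_+$; replacing $u$ by $\min(s,u)$ (which remains in $m\cdot X(1,0)$ by downward closure of the chain polytope) and setting $v:=s-\min(s,u)$ gives the desired decomposition whenever this minimum is $\le M$. By LP duality the minimum equals
$$\sup_{y\in[0,1]^P}\bigl(s\cdot y-m\chi(y)\bigr),\qquad \chi(y):=\max\Bigl\{\sum_{\al\in A}y_\al\ :\ A\ \text{antichain in}\ P\Bigr\},$$
and at integer $y=\mathbf{1}_{P'}$ this reduces exactly to the defining inequality $\sum_{\al\in P'}s_\al\le m w(P')+M$ of $X(m,M)$. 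It therefore remains to show that the supremum over fractional $y\in[0,1]^P$ does not exceed the supremum over integer $y$; I plan to prove this via a layer-cake decomposition $y=\sum_i(t_i-t_{i+1})\mathbf{1}_{L_i}$ of $y$ along its nested level sets $L_i=\{y\ge t_i\}$, combined with an exchange argument that concentrates the fractional weights $c_i=t_i-t_{i+1}$ on a single level set without decreasing $g(y):=s\cdot y-m\chi(y)$. For the integer Minkowski identity, the same argument applied to integer $s$ yields an integer optimum $u$.

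The main obstacle is precisely this reduction from fractional to integer $y$. The width function $w$ is \emph{not} submodular on subposets --- the diamond poset $\{0<a,b<1\}$ gives $w(\{0,a\})+w(\{0,b\})=2<3=w(\{0,a,b\})+w(\{0\})$ --- so $X(m,M)$ is not a polymatroid and the standard polymatroid Minkowski theory does not apply. Likewise, the chain-packing polytope $\{\lambda\ge 0:\sum_{C\ni\al}\lambda_C\le 1\}$ has fractional vertices in general (e.g.\ on the $2\times 2$ grid), so the dual LP cannot be closed merely by total dual integrality. The reduction must exploit the specific concave, piecewise-linear, positively $1$-homogeneous structure of $y\mapsto s\cdot y-m\chi(y)$ together with the chain--antichain interplay in $P$; this is where the combinatorial substance of the theorem resides.
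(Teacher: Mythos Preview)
Your overall strategy is sound and the LP duality you set up is correct: by minimax, $\min_{u\in mX(1,0)}\sum_\al(s_\al-u_\al)_+=\sup_{y\in[0,1]^P}\bigl(s\cdot y-m\chi(y)\bigr)$, and at integer $y=\mathbf{1}_{P'}$ the supremand is exactly $s(P')-mw(P')$. The gap is precisely where you locate it, and your proposed layer-cake/exchange argument does not close it. Since $\chi$ is convex and positively homogeneous, the layer-cake decomposition $y=\sum_i c_i\mathbf{1}_{L_i}$ gives $\chi(y)\le\sum_i c_i\,w(L_i)$ and hence $g(y)\ge\sum_i c_i\,g(\mathbf{1}_{L_i})$, which is the \emph{wrong} inequality. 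The ``exchange'' cannot rescue this: $g$ is concave on $[0,1]^P$, so from a maximiser $y^\ast$ every direction is (weakly) decreasing; concentrating weight on a single layer without decreasing $g$ amounts to asserting that the (convex) argmax of $g$ meets $\{0,1\}^P$, which is exactly the statement to be proved. Concretely, on the $2\times2$ grid with $s=(1,1,1,1)$, $m=1$, the point $y=(1,\tfrac12,\tfrac12,1)$ has $g(y)=2$ while its layer average is $\tfrac12\cdot2+\tfrac12\cdot1=\tfrac32<2$; the equality $g(y)=g(1,1,1,1)$ is a fact still requiring proof, not a consequence of layering. Your claim that ``the same argument applied to integer $s$ yields an integer optimum $u$'' is likewise unjustified: it would need integrality of $mX(1,0)\cap[0,s]$, which you have not established.

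The paper closes exactly this gap by a different device. It rewrites $M(s)=\max_{P'}(s(P')-mw(P'))$ as the optimum of a \emph{flow} LP on the poset $\bar P=P\cup\{\sigma,\tau\}$: variables $f_{\al\beta}$ for $\al<\beta$, unit capacities, flow conservation at each $\beta\in P$, objective $\sum s_\beta\cdot(\text{flow through }\beta)-m\cdot(\text{flow into }\tau)$. This flow LP has a totally unimodular constraint matrix (node--arc incidence plus box constraints), so its integer and fractional optima coincide; since the fractional optimum equals your $\sup_{y\in[0,1]^P}g(y)$ (the minimum flow into $\tau$ realising throughput $y$ is $\chi(y)$ by the LP Dilworth theorem), this is precisely the ``fractional $=$ integer'' step you were missing. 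The paper then reads off from an integral \emph{dual} optimum a coordinate $\delta$ with $M(s-e_\delta)\le M(s)-1$, giving $S(m,M+1)=S(m,M)+S(0,1)$ by induction rather than a one-shot decomposition. In short: your diagnosis that submodularity and naive TDI fail is correct, but the remedy is not a layer-cake argument; it is to pass to a flow formulation where total unimodularity is available.
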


In particular, the polytopes $X(m,M)$ have the integer decomposition property (see \cite{HOT,T}).  
As an application of Theorem \ref{Minsum} we obtain that the polytopes $X(m,M)$ define flat toric degenerations of $\G(d,n)$ (see \cite{An,AB,FaFL1,O}). 
Our approach is in some sense dual to the one used in \cite{BSS},
where the Pl\"ucker type relations for $\G(d,n)$ are utilized.  
We note that in \cite{BSS} the authors consider the Cayley sums, which are closely related to the Minkowski sums (see e.g. \cite{T}).

Let us close with the following remark. There are two natural directions we plan 
to address elsewhere. First, one can replace the Grassmannians with arbitrary 
flag varieties  of type $A$ or their PBW degenerate versions  
(see \cite{ABS,FaFL2,F1,F2,FeFi} for the corresponding results
on the PBW degenerations). Second, it is interesting to consider symplectic and orthogonal
cases where the PBW structures are also available (see \cite{BK,FFLi,FFL2,G,M1}).

The paper is organized as follows. 
In Section \ref{combinatorics} we introduce the polytopes $X(m,M)$ and study their properties. 
The proof of the main theorem is given in Appendix \ref{sec:appendix}.
In Section \ref{algebra}  we recall the construction of the FFLV bases
in the irreducible highest weight representations. We also introduce
the modules $L_{m,M}$ and study their algebraic properties using the polytopes $X(m,M)$.
In Section \ref{geometry} we introduce the graph closures  $\G(d,n)$ and 
describe the fibers of the natural projection map $\G(d,n)\to \Gr(d,n)$.
We use the results from the combinatorial and algebraic sections to describe
the geometric properties of $\G(d,n)$.

\section*{Acknowledgments}
We are grateful to Alexander Kuznetsov and  Igor Makhlin for useful discussions and correspondence.

\section{Combinatorics}\label{combinatorics}
In this section we attach a family of polytopes $X(m,M)$, $m, M\in \bZ_{\ge 0}$ 
to a finite poset $P$ and show that these polytopes satisfy Minkowsky property with respect to the parameters.
The polytopes $X(m,M)$ play a crucial role in the next sections.

Let $P$ be a finite poset. For a subposet $P'\subset P$ we denote by $w(P')$ the width of $P'$, which can be defined as a minimal number 
of chains needed to cover $P'$ or, equivalently, as the maximal length of an antichain in $P'$ (Dilworth's theorem \cite{D}). 
For  two nonegative integers $m$ and $M$ we define a polytope $X(m,M)\subset \bZ_{\ge 0}^P$
consisting of points $(x_\al)_{\al\in{P}}$ subject to the conditions 
\begin{equation}\label{ineq}
\sum_{\al\in P'} x_\al\le m\cdot w(P') + M \text{ for all } P'\subset P.
\end{equation}
We denote by $S(m,M)$ the set of integer points in $X(m,M)$.

\begin{example}
Let $m=0$. Then $X(0,M)$ is a scaled simplex of $P$ (the sum of all coordinates does 
not exceed $M$).  
\end{example}

\begin{example}\label{M=0}
Let $M=0$. Then $X(m,0)$ is a scaled chain polytope of $P$ (see \cite{Stan}).
In fact, the defining inequalities are $\sum_{\al\in P'} x_\al\le m\cdot w(P')$ for all $P'\subset P$,
which are implied by the subset of inequalities with $w(P')=1$ (i.e.\ for $P'$ being a single chain).
Therefore, $X(m,0)$ consists of elements $(x_\al)_{\al\in P}$ such that the 
sum over any chain does not 
exceed $m$. Hence $X(m,0)=m\Delta_{chain}(P)$, where $\Delta_{chain}(P)$ is the Stanley chain polytope.
In particular, the points of $S(1,0)$ are the indicator functions of antichains in $P$ and 
 $S(m,0)$ are all possible sums of $m$ indicator functions.
\end{example}

\begin{thm}\label{Msum}
For any $m_1,m_2,M_1,M_2\ge 0$ one has 
\[
S(m_1,M_1)+S(m_2,M_2) = S(m_1+m_2,M_1+M_2).
\]
\end{thm}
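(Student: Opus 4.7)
The inclusion $S(m_1,M_1) + S(m_2,M_2) \subseteq S(m_1+m_2, M_1+M_2)$ is immediate: for $x \in S(m_1,M_1)$ and $y \in S(m_2,M_2)$, adding the defining inequalities \eqref{ineq} yields $\sum_{\alpha \in P'}(x_\alpha + y_\alpha) \le (m_1+m_2)w(P') + (M_1+M_2)$ for each $P' \subseteq P$. So only the reverse inclusion is substantive, and my plan is to reduce it to two building blocks: the single-unit extraction identity $S(m, M) = S(m, M-1) + S(0,1)$ for $M \ge 1$, and the chain-polytope identity $S(m_1, 0) + S(m_2, 0) = S(m_1+m_2, 0)$. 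Granting both, the theorem follows by induction on $M_1 + M_2$: given $z \in S(m_1+m_2, M_1+M_2)$ with $M_1 + M_2 \ge 1$, apply the first identity with $m = m_1+m_2$ to write $z = z' + e_\alpha$ with $z' \in S(m_1+m_2, M_1+M_2-1)$, invoke the inductive hypothesis on $z'$, and attach $e_\alpha$ to whichever summand still has positive $M_i$. The base case $M_1 = M_2 = 0$ is the second identity, which is transparent from Example \ref{M=0}: $S(m,0)$ consists of sums of $m$ antichain indicators, and the statement reduces to pairing antichains.

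All the difficulty concentrates in the single-unit extraction. Writing $z_A := \sum_{\beta \in A} z_\beta$ for $A \subseteq P$, I need: given $z \in S(m,M)$ with $M \ge 1$, there exists $\alpha \in P$ with $z_\alpha \ge 1$ such that $z - e_\alpha \in S(m, M-1)$, i.e., $\alpha$ must lie in every \emph{tight} subposet $P'$, meaning one with $z_{P'} = m w(P') + M$. A natural strategy is to analyze the family $\mathcal{T}(z)$ of tight subposets and produce a well-defined minimum element $P_*$. The useful partial observation is that for $A, B \in \mathcal{T}(z)$, the identity $z_A + z_B = z_{A \cup B} + z_{A \cap B}$ together with the polytope constraints on $A \cup B$ and $A \cap B$ forces the inequality $w(A) + w(B) \le w(A \cup B) + w(A \cap B)$; when equality holds, both $A \cup B$ and $A \cap B$ lie in $\mathcal{T}(z)$ as well, giving closure under intersection. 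Once a minimum tight $P_*$ is found and shown to be nonempty (using $M \ge 1$, so $\emptyset \notin \mathcal{T}(z)$), tightness on $P_*$ gives $z_{P_*} = m w(P_*) + M \ge 1$, so some $\alpha \in P_*$ satisfies $z_\alpha \ge 1$ and is the desired extractable coordinate.

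The main obstacle is that the width function $w$ is not generally supermodular on subposets, so the closure of $\mathcal{T}(z)$ under intersection cannot be read off from the combinatorics of $w$ alone and must be inferred from the polytope constraints. Ruling out tight pairs $A, B$ for which supermodular equality fails — in particular pairs with empty or small intersection — requires leveraging $M \ge 1$ together with the elementary subadditivity $w(A \cup B) \le w(A) + w(B)$ and the antichain/chain-cover duality of Dilworth's theorem. Producing a canonical choice of $\alpha$ uniform enough for the induction to close is presumably what occupies the appendix, and I would expect it to proceed via a careful analysis of critical chain covers witnessing tightness.
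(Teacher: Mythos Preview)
Your reduction to the two building blocks---the chain-polytope identity $S(m_1,0)+S(m_2,0)=S(m_1+m_2,0)$ and the single-unit extraction $S(m,M+1)=S(m,M)+S(0,1)$---is exactly what the paper does. The problem lies in your plan for the extraction step.

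You aim for a \emph{minimum} tight set $P_*$ by arguing that $\mathcal{T}(z)$ is closed under intersection. This closure is false. Take $P=\{a,b,c\}$ with $a<c$, $b<c$, $a\parallel b$; set $m=M=1$ and $z_a=z_b=z_c=1$. Then the tight subposets are $\{a,c\}$, $\{b,c\}$, $\{a,b,c\}$, but $\{a,c\}\cap\{b,c\}=\{c\}$ is \emph{not} tight ($z_{\{c\}}=1<2$). Here $w(A)+w(B)=2$ while $w(A\cup B)+w(A\cap B)=3$, so the supermodular equality you need fails and cannot be rescued by the polytope constraints: both $A$ and $B$ are genuinely tight and their intersection is not small. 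There is no minimum element of $\mathcal{T}(z)$.

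What you actually need is weaker: every tight set contains a common element (in the example, $c$), even though that common intersection is not itself tight. Any such $\delta$ works. But your sketch gives no mechanism for producing this Helly-type conclusion, and the guess that the appendix proceeds via ``critical chain covers'' is off the mark.

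The appendix instead uses linear programming duality. It realises $M(z)=\max_{P'}\bigl(\sum_{\alpha\in P'}z_\alpha - m\,w(P')\bigr)$ as the optimum of an integer program whose constraint matrix (encoding flows along chains in $P$ augmented by a source and sink) is shown to be totally unimodular. The LP dual then has an integral optimum $(g,h)$ with $\sum_\beta g_\beta=M(z)>0$, so some coordinate $g_\delta\ge 1$; decrementing $g_\delta$ yields a dual-feasible pair for $z-e_\delta$, whence $M(z-e_\delta)\le M(z)-1$. The element $\delta$ is produced by duality, not by any lattice structure on the tight sets.
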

\begin{proof}
We note that $S(m_1,0)+S(m_2,0) = S(m_1+m_2,0)$ (see Example \ref{M=0}). Hence it suffices to show that 
$S(m,M)+S(0,1) = S(m,M+1)$. This is shown in Theorem \ref{M+1}.
\end{proof}

\begin{cor}\label{Xcor}
The polytopes $X(m,M)$ are normal lattice polytopes. 
For any nonnegative $m_1,m_2,M_1,M_2$ one has 
\[
X(m_1,M_1)+X(m_2,M_2) = X(m_1+m_2,M_1+M_2).
\]
\end{cor}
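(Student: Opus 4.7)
The plan is to derive the corollary from Theorem \ref{Msum} by means of two elementary observations: first, the defining inequalities \eqref{ineq} scale linearly in $(m,M)$, so $kX(m,M) = X(km,kM)$ for every positive integer $k$; second, for finite subsets $A,B$ of Euclidean space one has $\mathrm{conv}(A) + \mathrm{conv}(B) = \mathrm{conv}(A+B)$. Everything else should be formal.

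I would begin by checking that $X(m,M)$ is a lattice polytope. Let $v$ be a vertex of $X(m,M)$. Since $X(m,M)$ is cut out by inequalities with integer coefficients, $v$ is rational, so there is an integer $k \ge 1$ with $u := kv \in \bZ^P$. The scaling identity places $u$ in $(kX(m,M)) \cap \bZ^P = X(km,kM) \cap \bZ^P = S(km,kM)$. Iterating Theorem \ref{Msum} gives a decomposition $u = u_1 + \cdots + u_k$ with each $u_i \in S(m,M) \subset X(m,M)$; then $v = \tfrac{1}{k}(u_1 + \cdots + u_k)$ is a convex combination of points of $X(m,M)$, and extremality of $v$ forces $u_1 = \cdots = u_k = v$. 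In particular $v \in S(m,M)$, which proves both that $X(m,M)$ is a lattice polytope and that $X(m,M) = \mathrm{conv}\, S(m,M)$. The same chain of equalities
\[
(kX(m,M)) \cap \bZ^P = S(km,kM) = \underbrace{S(m,M) + \cdots + S(m,M)}_{k \text{ terms}}
\]
is precisely the integer decomposition property, so $X(m,M)$ is normal.

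For the Minkowski identity, the inclusion $X(m_1,M_1) + X(m_2,M_2) \subseteq X(m_1+m_2,M_1+M_2)$ is immediate: adding two copies of the inequality \eqref{ineq} for a fixed $P' \subset P$ produces the required inequality for the sum. The reverse inclusion follows by combining $X(m,M) = \mathrm{conv}\, S(m,M)$, Theorem \ref{Msum}, and the convex-hull/Minkowski identity noted at the outset:
\begin{align*}
X(m_1,M_1) + X(m_2,M_2)
&= \mathrm{conv}\, S(m_1,M_1) + \mathrm{conv}\, S(m_2,M_2) \\
&= \mathrm{conv}\bigl(S(m_1,M_1) + S(m_2,M_2)\bigr) \\
&= \mathrm{conv}\, S(m_1+m_2,M_1+M_2) = X(m_1+m_2,M_1+M_2).
\end{align*}

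All the genuine combinatorial content is absorbed into Theorem \ref{Msum}; within the derivation above, the only minor point to verify carefully is that iterated applications of Theorem \ref{Msum} indeed yield $k$-fold Minkowski decompositions, which is a one-line induction on $k$. I do not anticipate any essential obstacle at this level --- the hard work is upstream, in the single-parameter step $S(m,M) + S(0,1) = S(m,M+1)$ that drives Theorem \ref{Msum}.
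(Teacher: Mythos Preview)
Your proof is correct and follows essentially the same approach as the paper: both derive the lattice property by scaling a rational vertex, applying Theorem~\ref{Msum} to split the integer point, and using extremality. For the Minkowski identity you take a slightly cleaner shortcut---using $X(m,M)=\mathrm{conv}\,S(m,M)$ together with $\mathrm{conv}(A)+\mathrm{conv}(B)=\mathrm{conv}(A+B)$---whereas the paper repeats the rational-scaling argument, but this is a cosmetic difference rather than a genuinely different route.
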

\begin{proof}
Assume that $p$ is a rational vertex of $X(m,M)$. Then there exists $r>0$ such that $rp$ has integer coordinates and hence 
$rp\in S(rm,rM)$. Now Theorem \ref{Msum}  implies that $rp$ is equal to a sum of $r$ points in $S(m,M)$: $rp=p_1+\dots + p_r$, $p_i\in S(m,M)$. Since $rp$ is a vertex
of $X(rm,rM)$, we conclude that $p_i=p$ and hence $p\in S(m,M)$. So $X(m,M)$ are lattice polytopes. 

The claim $X(m_1,M_1)+X(m_2,M_2) = X(m_1+m_2,M_1+M_2)$ is proved in a similar way.
First, let $p\in X(m_1+m_2,M_1+M_2)$ be a rational point. Then there exists an $r>0$
such that $rp\in S(r(m_1+m_2),r(M_1+M_2))$. Hence there exists $y_1\in S(rm_1,rM_1)$,
$y_2\in S(rm_2,rM_2)$ such that $rp=y_1+y_2$. Now $y_1/r\in X(m_1,M_1)$ and
$y_2/r\in X(m_2,M_2)$ and hence $p\in X(m_1,M_1)+X(m_2,M_2)$. Since all polytpoes 
$X(m,M)$ are lattice, we conclude that the desired Minkowski sum property holds 
in general.
\end{proof}

Let us derive a corollary from Theorem \ref{Msum}.
\begin{cor}\label{corP}
Let $P$ be a finite poset such that for some $m,M\in\bZ_{\ge 0}$ one has 
\[
|P'| \le m\cdot w(P') + M \text{ for all } P'\subset P.
\]
Then there exists a decomposition of $P$ into the union of disjoint sets
\[
P = A_1 \sqcup \ldots \sqcup  A_m \sqcup B,
\]   
where $A_i$ are antichains and $|B|\le M$.
\end{cor}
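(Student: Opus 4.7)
The entire statement follows by observing that the hypothesis is precisely the assertion that the all-ones vector $\mathbf{1}_P := (1)_{\alpha \in P} \in \bZ_{\ge 0}^P$ belongs to $S(m,M)$, and then unpacking the Minkowski-sum decomposition of Theorem~\ref{Msum}. Concretely, the defining inequalities of $S(m,M)$, evaluated at $x = \mathbf{1}_P$, read $|P'| \le m \cdot w(P') + M$ for all $P' \subset P$, which is exactly what is assumed. Hence $\mathbf{1}_P \in S(m,M)$.

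Next, I would iterate Theorem~\ref{Msum} (or equivalently Corollary~\ref{Xcor} restricted to lattice points) to obtain
\[
S(m,M) \;=\; \underbrace{S(1,0) + \cdots + S(1,0)}_{m \text{ summands}} \;+\; \underbrace{S(0,1) + \cdots + S(0,1)}_{M \text{ summands}}.
\]
Applying this to $\mathbf{1}_P$, we get a decomposition $\mathbf{1}_P = a_1 + \cdots + a_m + b_1 + \cdots + b_M$ with each $a_i \in S(1,0)$ and each $b_j \in S(0,1)$.

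Now the two basic building blocks are easy to identify. By Example~\ref{M=0}, $S(1,0)$ consists exactly of the indicator functions $\one_{A_i}$ of antichains $A_i \subset P$. On the other hand, $S(0,1) = X(0,1) \cap \bZ_{\ge 0}^P$ is the set of nonnegative integer vectors with coordinate sum at most $1$, i.e.\ either the zero vector or the indicator function $\one_{\{c_j\}}$ of a singleton $\{c_j\} \subset P$.

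To finish, I would read off the desired partition from the equality $\mathbf{1}_P = \sum_i \one_{A_i} + \sum_j b_j$. Since the left-hand side takes value exactly $1$ at every $\alpha \in P$, the supports of the summands must be pairwise disjoint and together cover $P$; in particular the $A_i$ are pairwise disjoint antichains, and the nonzero $b_j$ are indicators of pairwise distinct singletons. Setting $B := \bigcup_{j : b_j \neq 0} \{c_j\}$, we obtain $P = A_1 \sqcup \cdots \sqcup A_m \sqcup B$ with $|B| \le M$, as required. The argument is essentially a one-liner given Theorem~\ref{Msum}; the only "content" is recognising that the hypothesis says exactly that $\mathbf{1}_P \in S(m,M)$, and the only obstacle — which has already been overcome upstream — is Theorem~\ref{Msum} itself.
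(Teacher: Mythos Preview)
Your proof is correct and follows essentially the same route as the paper: recognise that the hypothesis says $\mathbf{1}_P\in S(m,M)$, apply Theorem~\ref{Msum} to split off $m$ summands from $S(1,0)$ (indicators of antichains) plus a remainder, and read the partition off the supports. The only cosmetic difference is that the paper leaves the remainder in $S(0,M)$ rather than decomposing it further into $M$ pieces from $S(0,1)$; either way, nonnegativity of the summands together with $\mathbf{1}_P$ having all entries equal to~$1$ forces the supports to be pairwise disjoint and to cover $P$, with $|B|\le M$.
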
 
\begin{proof}
To show that Theorem \ref{Msum} implies Corollary \ref{corP} it suffices to take a point 
$\bs\in \bZ_{\ge 0}^P$ such that $s_\al=1$ for all $\al\in P$. By definition, $\bs\in S(m,M)$ and
hence there exist $\bs^1,\dots,\bs^m\in S(1,0)$ such that $\bs-\sum_{i=1}^m \bs^i\in S(0,M)$.
Now $A_i$ is the support of $\bs^i$ and $B$ is the support of 
$\bs-\sum_{i=1}^m \bs^i\in S(0,M)$.    
\end{proof}

\section{Algebra}\label{algebra}
In this section we introduce and study  main algebraic objects we are interested in.
In particular, the polytopes from the previous section are used to construct
monomial bases in certain cyclic representations of abelian algebras.

\subsection{The PBW setup}
We fix nonnegative integers $n$ and $d$ such that $1\le d<n$. Let $[n]=\{1,\dots,n\}$.
Let $\fn_-\subset \msl_n$ be the Lie algebra of lower triangular matrices.
It contains the $d(n-d)$ dimensional abelian radical $\fa_d\subset \fn_-$ spanned by the matrix
units $E_{i,j}$, $i=d+1,\dots,n$, $j=1,\dots,d$. The same basis of $\fa_d$
can be described in terms of root vectors. Namely,   
let $\al_i$, $i\in [n-1]$ be simple roots of the Lie algebra $\msl_n$.
For $1\le i\le j\le n-1$ let $\al_{i,j}=\al_i+\dots +\al_j$ be the positive
roots. Let $f_{i,j}\in \fn_-$ be the root vector for the root $-\al_{i,j}$.
In particular, $f_{i,j}= E_{j+1,i}$. Then $\fa_d$ is spanned by $f_{i,j}$ with
$i\le d\le j$.  
 
Let $\omega_i$, $i\in [n-1]$ be the fundamental weights. For a dominant integral weight $\la=\sum m_i\omega_i$ we denote by $L_\la$ the
corresponding irreducible highest weight $\msl_n$ module with a highest weight
vector $\ell_\la\in L_\la$. 
In this paper we deal with $\la=m\omega_d$. In particular, $L_{\omega_d}\simeq \Lambda^d(L)$, where $L=L_{\omega_1}$ is the $n$-dimensional vector representation.
We fix a basis $\ell_1,\dots,\ell_n$ of $L$; for $I=\{i_1<\dots<i_d\}\subset [n]$    
let $\ell_I\in L_{\omega_d}$ be the wedge product $\ell_{i_1}\wedge\dots \wedge \ell_{i_d}$.

We note that the vector $\ell_{\om_d}=\ell_1\wedge\dots\wedge \ell_d$ is the highest weight vector 
of $L_{\omega_d}$. In particular, 
\[
L_{\omega_d}=\U(\fn_-)\ell_{\om_d}=\U(\fa_d)\ell_{\om_d}.
\] 
For an arbitrary $m>0$ the irreducible module $L_{m\omega_d}$ sits inside the 
tensor power $L_{\omega_d}^{\T m}$ as a Cartan component. More precisely,
\[
L_{m\omega_d} =  \U(\fa_d) \ell_{\omega_d}^{\T m}\subset L_{\omega_d}^{\otimes m},
\]
the highest weight vector $\ell_{m\om_d}\in L_{m\om_d}$ is identified with
$\ell_{\omega_d}^{\T m}$.
In general, for two cyclic $\fa_d$ modules $U_1=\U(\fa_d)u_1$ and 
$U_2=\U(\fa_d)u_2$ we define the cyclic $\fa_d$ module $U_1\odot U_2$
as 
\[
U_1\odot U_2 = \U(\fa_d) (u_1\T u_2)\subset U_1\T U_2.
\]
In particular, $L_{m_1\omega_d}\odot L_{m_2\omega_d}\simeq L_{(m_1+m_2)\omega_d}$.

Since $\fa_d$ is abelian, each representation $L_{m\omega_d}$ is naturally
identified with a quotient of the polynomial ring $\bC[f_{i,j}]$, 
$i\le d\le j$ by a certain ideal. We note that this ideal is graded with respect 
to the total degree of polynomials in  $\bC[f_{i,j}]$ (the degree
of each variable $f_{i,j}$ is one). Hence we get the induced PBW grading
on   $L_{m\omega_d}$:
\[
L_{m\omega_d}=\bigoplus L_{m\omega_d}(r),\ 
L_{m\omega_d}(r)=\mathrm{span} \{f_{i_1,j_1}\dots f_{i_r,j_r}\ell_{m\om_d}, i_\bullet\le d\le j_\bullet\}. 
\]

\begin{example}
If $m=1$, then $L_{\omega_d}(r)\subset \Lambda^d(L)$ is spanned by vectors 
$\ell_I$ such that $\# (I\cap [d]) = d-r$. In other words, the degree of a
wedge monomial $\ell_I$ is equal to the number of elements of $I$ outside 
$[d]$ (this the PBW degree in the terminology of \cite{FFL1} or the level in the terminology of \cite{FO,FSS}). 
\end{example}

In \cite{FFL1} (see also \cite{Vin}) the authors construct monomial bases
for the finite-dimensional irreducible highest weight representations of 
$\msl_n$. We recall the construction below (for highest weights $m\om_d$).
A Dyck path ${\bf p}=(p_1,\dots,p_{n-1})$ is a subset of the set $\al_{i,j}$, $i\le d\le j$
satisfying the following conditions: 
\begin{itemize}
	\item $p_1=\al_{1,d}$, $p_{n-1} = \al_{d,n-1}$,
	\item if $p_s=\al_{i_s,j_s}$, then $p_{s+1}=\al_{i_s+1,j_s}$ or $p_{s+1}=\al_{i_s,j_s+1}$. 
\end{itemize}     
\begin{rem}
Recall that the roots of $\fa_d$ are naturally identified with  
matrix units: $f_{i,j}= E_{j+1,i}$. Via this
identification, a Dyck path starts at $E_{d+1,1}$, ends at $E_{n,d}$ and 
at each step goes either one cell down or one cell right.   
\end{rem}
  
\begin{rem}\label{chain}
Let $R(d)$ be the set of roots $\al_{i,j}$ with $1\le i\le d\le j<n$.
Consider the following partial order on $R(d)$ : 
$\al_{i_1,j_1}\ge \al_{i_2,j_2}$ if $i_1\ge i_2$ and $j_1\ge g_2$. Then the Dyck paths are exactly the maximal chains in the poset $R(d)$.
\end{rem}

The monomials $f^\bs\ell_{m\om_d} = \prod f_{i,j}^{s_{i,j}} \ell_{m\om_d}$
satisfying the conditions 
\begin{equation}\label{FFLV}
\sum_{\al\in \bp} s_\al\le m \quad \text{ for all Dyck paths } \bp
\end{equation}
form a basis of $L_{m\om_d}$. 
In other words, the basis elements are labeled by the integer points in the scaled 
Stanley chain polytope of $R(d)$ (see \cite{Stan}). 
Let $P_{m\om_d}\subset \bR_{\ge 0}^{d(n-d)}$
denote the FFLV polytopes defined by inequalities \eqref{FFLV} and let 
$S_{m\om_d}=P_{m\om_d}\cap \bZ_{\ge 0}^{d(n-d)}$. 

\begin{example}\label{fundamental}
Let $m=1$. Then the points 	in $S_{\om_d}$ are in one-to-one correspondence
with antichains in $R(d)$. In fact, the antichains in $R(d)$ are of the form
\[
\al_{i_1,j_1}, \dots, \al_{i_s,j_s} \text{ with }  i_1<\dots <i_s, j_1>\dots >j_s
\]
and the FFLV basis in $L_{\omega_d}$ is 
\[
f_{i_1,j_1} \dots f_{i_s,j_s}\ell_{\om_d},\quad  i_1<\dots <i_s, j_1>\dots >j_s.
\]
\end{example} 

Let $\fp_d\subset\msl_n$ be the standard maximal parabolic subalgebra corresponding to 
the radical $\fa_d$. More precisely,  $\fp_d$ is spanned by the matrix units
$E_{i,j}$ such that either $i\le d$ or $j>d$ (with an extra traceless condition). 
One has $\msl_n = \fa_d\oplus \fp_d$. Since $\fp_d$ is a subalgebra
in $\msl_n$, any representation of $\msl_n$ is naturally a module over the parabolic subalgebra. 
One easily sees that 
$$\fp_d L_{m\om_d}(r) \subset \bigoplus_{r'\le r} L_{m\om_d}(r')$$ 
(because $L_{m\omega_d}(r)=\mathrm{span} \{f_{i_1,j_1}\dots f_{i_r,j_r}\ell_{m\om_d}, i_\bullet\le d\le j_\bullet\}$ and $\fp_d \ell_{m\om_d}=0$)  
and hence we obtain an induced $\fp_d$ action on the module
\[
{\rm gr} L_{m\omega_d}=\bigoplus_{r'\le r} L_{m\om_d}(r')\Bigl/\Bigr.\bigoplus_{r'< r} L_{m\om_d}(r').
\]

\begin{rem}
We note that ${\rm gr} L_{m\omega_d}$ is naturally isomorphic to 
$L_{m\omega_d}$ as a vector space and as $\fa_d$ modules.
We use ${\rm gr}$ to 
stress that the  $\fp_d$ action is different. In particular, 
${\rm gr} L_{m\omega_d}$ is a direct sum of $\fp_d$ submodules
$L_{m\omega_d}(r)$, while the summands
$L_{m\omega_d}(r)$ of $L_{m\omega_d}$ are not $\fp_d$ invariant.
\end{rem}

\begin{lem}
The induced actions of $\fa_d$ and $\fp_d$ on ${\rm gr} L_{m\omega_d}$
glue into the action of the degenerate algebra $\msl_n^{(d)}$, which is 
isomorphic to $\fa_d\oplus \fp_d$ as a vector space. The Lie algebra structure  
on $\msl_n^{(d)}$ is defined as follows: $\fp_d$ is a subalgebra,
$\fa_d$ is an abelian ideal and the adjoint action of $\fp_d$ on $\fa_d$ is induced
by the identification of $\fa_d$ with the natural $\fp_d$ module $\msl_n/\fp_d$.
\end{lem}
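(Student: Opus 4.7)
The plan is to verify that the glued action on $\gr L_{m\omega_d}$ satisfies precisely the bracket relations prescribed for the abstract Lie algebra $\msl_n^{(d)} = \fa_d \oplus \fp_d$, namely: $\fa_d$ acts as an abelian ideal, $\fp_d$ as a subalgebra, and the mixed bracket is the $\fp_d$-action on $\fa_d \simeq \msl_n/\fp_d$. Once these relations hold on $\gr L_{m\omega_d}$, the Jacobi identity for the abstract semidirect product is automatic (it is inherited from the fact that $\msl_n/\fp_d$ is a genuine $\fp_d$-module), so the glued operators extend to a representation of $\msl_n^{(d)}$.

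First, I would set up the PBW filtration carefully. Let $F_r L_{m\om_d} = \bigoplus_{r'\le r} L_{m\om_d}(r')$, and let $U^{(r)} \subset U(\msl_n)$ be the filtration spanned by products containing at most $r$ factors from $\fa_d$ (the rest from $\fp_d$); by the decomposition $\msl_n = \fa_d\oplus\fp_d$ and the PBW theorem we have $U^{(r)}\ell_{m\om_d} = F_r$. The obvious inclusion $\fa_d F_r \subset F_{r+1}$ is built into the definition. To check $\fp_d F_r \subset F_r$, pick $p\in\fp_d$ and $a_1\cdots a_r \in \fa_d^r$; since $\fp_d\ell_{m\om_d}=0$ we compute
\begin{equation}
p\,a_1\cdots a_r\,\ell_{m\om_d}
=\sum_{i=1}^r a_1\cdots [p,a_i]\cdots a_r\,\ell_{m\om_d}.
\end{equation}
Decomposing $[p,a_i] = [p,a_i]_{\fa_d}+[p,a_i]_{\fp_d}$ along $\msl_n = \fa_d\oplus\fp_d$, the first summand keeps the degree at $r$, while the second drops the degree to $r-1$ and can be pushed to the right by induction on $r$; so the filtration is preserved and only the $\fa_d$-part contributes modulo $F_{r-1}$.

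Second, I would read off the brackets on $\gr L_{m\om_d}$ from this computation. For $a,a'\in\fa_d$ the identity $[a,a']=0$ holds already in $\msl_n$, so the induced operators commute. For $p,p'\in\fp_d$ the action factors through the honest Lie algebra $\fp_d \subset \msl_n$ on each quotient $F_r/F_{r-1}$, giving a Lie algebra action. The decisive step is the mixed bracket: for $v\in F_r$, the display above shows
\begin{equation}
(pa - ap)\,v \equiv [p,a]_{\fa_d}\,v \pmod{F_r},
\end{equation}
hence on $\gr$ the operator $[p,a]$ equals the action of $[p,a]_{\fa_d}\in\fa_d$, which is by definition the image of $[p,a]$ under the projection $\msl_n\twoheadrightarrow\msl_n/\fp_d \simeq \fa_d$. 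This is exactly the prescribed $\fp_d$-module structure on $\fa_d$.

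Finally I would conclude by observing that the bracket formulas just verified coincide with the defining relations of $\fa_d \rtimes \fp_d$ (with $\fp_d$ acting on $\fa_d$ via $\msl_n/\fp_d$), so the map sending $a\mapsto a$, $p\mapsto p$ extends to a representation of $\msl_n^{(d)}$ on $\gr L_{m\om_d}$. The only mildly delicate point is the bookkeeping in step two: one must make sure that when $[p,a_i]$ has a $\fp_d$-component, the resulting expression, after pushing $[p,a_i]_{\fp_d}$ past the remaining $a_j$'s and onto $\ell_{m\om_d}$, really lands in $F_{r-1}$. This follows by an easy induction on $r$, using that each further commutator of $\fp_d$ with an $a_j$ again decomposes into an $\fa_d$-part (staying in $U^{(r-1)}$) and a $\fp_d$-part (lowering the filtration further), and that $\fp_d$ annihilates $\ell_{m\om_d}$.
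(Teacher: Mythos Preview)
Your argument is correct and is exactly the direct verification that the paper's one-line proof (``Immediate from the definitions'') leaves implicit. The only small imprecision---shared with the paper---is that $\fp_d\ell_{m\om_d}=0$ fails for Cartan elements, but since $\fp_d$ stabilizes the highest weight line the extra term $a_1\cdots a_r\,p\,\ell_{m\om_d}$ still lies in $F_r$ and the remainder of your computation is unaffected.
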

\begin{proof}
Immediate from the definitions (see also \cite{FFL1,F3,PY}). 
\end{proof}

\subsection{The extension}	
Let us consider the following graded module $V$ of the abelian  Lie
algebra $\fa_d$. As a vector space $V=V(0)\oplus V(1)$, where $V(0)$ is one-dimensional	space spanned by vector $v$ and $V(1)\simeq \fa_d$ as a vector space. The abelian
Lie algebra $\fa_d$ acts trivially on $V(1)$ and sends $V(0)$ to $V(1)$ via
the identification $V(0)\otimes \fa_d\simeq V(1)$. Then $V$ is generated
from $v$ by the action of $\fa_d$ and $V$ is isomorphic
to the quotient of the polynomial algebra in variables $f_{i,j}$, $1\le i\le d\le j\le n-1$ by the ideal generated by all monomials of degree two.

Similarly, we define the $\fa_d$ modules $V_M$, $M\ge 1$ as the quotient 
of $\bC[f_{i,j}]$ by the ideal generated by all degree $M+1$ monomials
(in particular, $V\simeq V_1$).  
Then $V_M \simeq V^{\odot M}_1$ as $\fa_d$ modules and   
\[
V_M = \bigoplus_{r=0}^M V_M(r),\  V_M(r)\simeq S^r(\fa_d).
\]	
Let $v_M\in V_M$ be the cyclic vector, $v_M=v^{\T M}$ with respect to the embedding $V_M\subset V_1^{\T M}$. 

\begin{lem}
The $\fa_d$ action on $V_M$ extends to the $\msl_n^{(d)}$ action by letting
$\fp_d$ act on  $V_M(r)$ as on $S^r(\msl_n/\fp_d)$.
\end{lem}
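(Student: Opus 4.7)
The plan is to realize $V_M$ as a truncation of the full symmetric algebra $S^\bullet(\fa_d)$ on which an $\msl_n^{(d)}$-action can be written down cleanly, and then check the required bracket relations by a one-line Leibniz computation. Concretely, I would identify the decomposition $V_M=\bigoplus_{r=0}^M V_M(r)$ with $\bigoplus_{r=0}^M S^r(\fa_d)$, where $\fa_d$ acts by multiplication (raising degree by one, and by zero on the top piece $V_M(M)$), exactly matching the description of $V_M$ as $\bC[f_{i,j}]$ modulo degree-$(M+1)$ monomials.

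Next I would define the $\fp_d$-action on each $V_M(r)$ as follows: use the identification $\fa_d\simeq \msl_n/\fp_d$ to transport the adjoint $\fp_d$-action on $\msl_n/\fp_d$ to $\fa_d\simeq V_M(1)$, and then extend it to $V_M(r)=S^r(\fa_d)$ as a derivation of the symmetric algebra, i.e.\
\[
x\cdot(y_1\cdots y_r)=\sum_{i=1}^r y_1\cdots(x\cdot y_i)\cdots y_r,\qquad x\in\fp_d,\ y_i\in\fa_d,
\]
where $x\cdot y_i=\pi_{\fa_d}([x,y_i]_{\msl_n})$ is the $\msl_n^{(d)}$-bracket. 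Since $\fa_d\simeq\msl_n/\fp_d$ is a $\fp_d$-module under the adjoint action, the extension by derivations preserves each graded piece and is automatically a Lie algebra action of $\fp_d$ on $S^\bullet(\fa_d)$, hence on each $V_M(r)$.

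It remains to check the three families of relations in $\msl_n^{(d)}=\fa_d\oplus\fp_d$. The relation $[\fa_d,\fa_d]=0$ is automatic because $\fa_d$ acts by commuting multiplications. The relation $[\fp_d,\fp_d]\subset\fp_d$ is immediate from the previous paragraph. The only real content is the mixed relation: for $x\in\fp_d$, $y\in\fa_d$ and $v=y_1\cdots y_r\in V_M(r)$ with $r<M$, the Leibniz rule gives
\[
x\cdot(y\cdot v)-y\cdot(x\cdot v)=x\cdot(y_1\cdots y_r y)-y\sum_i y_1\cdots(x\cdot y_i)\cdots y_r=(x\cdot y)\,y_1\cdots y_r,
\]
and $(x\cdot y)\,v$ is exactly the action of $[x,y]_{\msl_n^{(d)}}=\pi_{\fa_d}([x,y]_{\msl_n})\in\fa_d$ on $v$. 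For $r=M$ both sides vanish in $V_M(M+1)=0$. This is the key (and only) calculation; I do not expect a genuine obstacle, since the construction is just the standard way of assembling an abelian ideal with its quotient action into a semidirect product action on a symmetric algebra.
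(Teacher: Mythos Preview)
Your proposal is correct and takes essentially the same approach as the paper. The paper's proof is a two-sentence remark (``$\fa_d$ acts on $V_M$ simply by multiplication; now the claim follows from the definition of the action of $\fp_d$''), and what you have written is precisely the unfolding of that remark: the $\fp_d$-action by derivations on $S^\bullet(\fa_d)$ together with the Leibniz check of the mixed bracket, including the boundary case $r=M$ where everything lands in $V_M(M+1)=0$.
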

\begin{proof}
We note that $\fa_d$ acts on $V_M$ simply by multiplication. Now the claim follows from the definition of the action of $\fp_d$.
\end{proof}

We are now ready to define the main algebraic object of this paper.

\begin{dfn}
For $m,M\ge 0$ we define the $\fa_d$ module 
\[
L_{m,M} = \mathrm{gr}L_{m\om_d}\odot V_M.
\]	
\end{dfn}

\begin{lem}
$L_{m,M}$ are cyclic $\fa_d$ modules, $L_{m,M}=\mathrm{gr} L_{\om_d}^{\odot m}\odot V^{\odot M}$. The  $\fa_d$ action extends to the action of $\msl_n^{(d)}$.
\end{lem}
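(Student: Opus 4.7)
The plan is to verify the three assertions in the order stated --- cyclicity, the iterated tensor decomposition, and the extension to $\msl_n^{(d)}$ --- directly from the definitions.

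\textbf{Cyclicity.} By the very definition of $\odot$, if $U_i=\U(\fa_d)u_i$ for $i=1,2$, then $U_1\odot U_2=\U(\fa_d)(u_1\T u_2)$ is cyclic with cyclic vector $u_1\T u_2$. Applied to $\mathrm{gr} L_{m\om_d}$ (cyclic vector $\ell_{m\om_d}$) and $V_M$ (cyclic vector $v_M$), this immediately shows that $L_{m,M}$ is cyclic over $\fa_d$ with cyclic vector $w:=\ell_{m\om_d}\T v_M$.

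\textbf{Iterated decomposition.} I would first note that $\odot$ is associative on cyclic $\fa_d$-modules, since the iterated $\odot$ of $U_1,\dots,U_k$ equals $\U(\fa_d)(u_1\T\cdots\T u_k)$ independently of parenthesization. Combining the identity $L_{m\om_d}=L_{\om_d}^{\odot m}$ (recalled in the PBW setup) with the remark that $\mathrm{gr} L_{m\om_d}\simeq L_{m\om_d}$ as $\fa_d$-modules (and similarly for $L_{\om_d}$) yields $\mathrm{gr} L_{m\om_d}\simeq (\mathrm{gr} L_{\om_d})^{\odot m}$. Together with $V_M=V^{\odot M}$, which holds by the construction of $V_M$, this gives the asserted product formula.

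\textbf{Extension of the action.} The diagonal action endows $\mathrm{gr} L_{m\om_d}\T V_M$ with a structure of $\msl_n^{(d)}$-module, so it suffices to verify that the $\fa_d$-submodule $L_{m,M}$ is $\fp_d$-stable. I would first show that $w$ is a $\fp_d$-eigenvector in the ambient tensor product. On the one hand, $L_{m\om_d}(0)=\bC\ell_{m\om_d}$ is a one-dimensional $\fp_d$-submodule of $\mathrm{gr} L_{m\om_d}$ (it is the bottom of the PBW filtration, and $\fp_d$ preserves this filtration on the graded module), so $\fp_d$ acts on $\ell_{m\om_d}$ through a character $\chi:\fp_d\to\bC$. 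On the other hand, by the construction of the $\msl_n^{(d)}$-action on $V_M$, $\fp_d$ acts as zero on $V_M(0)=\bC v_M$. Hence $p\cdot w=\chi(p)w$ for every $p\in\fp_d$. Using $[\fp_d,\fa_d]\subset\fa_d$ (since $\fa_d$ is an ideal in $\msl_n^{(d)}$), a routine commutator expansion
\[
p\cdot(a_1\cdots a_k w)=\sum_i a_1\cdots[p,a_i]\cdots a_k w+\chi(p)\,a_1\cdots a_k w
\]
shows inductively that $\fp_d\cdot L_{m,M}\subset L_{m,M}$, which finishes the proof.

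The only genuinely substantive point is the eigenvector claim for $w$, and even this reduces to the (essentially tautological) facts that $L_{m\om_d}(0)$ is one-dimensional and $\fp_d$-stable in the graded action, and that $\fp_d$ kills the bottom piece $V_M(0)$ by construction; the rest is a formal consequence of the semidirect-product structure $\msl_n^{(d)}=\fa_d\rtimes\fp_d$ together with the cyclicity from Step 1.
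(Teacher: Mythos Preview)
Your proof is correct and follows the same route as the paper, which dispatches the lemma in two lines: the first claim (cyclicity and the iterated $\odot$-decomposition) is declared ``a reformulation of the definition,'' and the extension of the action is justified by the single remark that ``all the factors are $\msl_n^{(d)}$ modules.'' You have simply unpacked what that remark entails---namely, that the cyclic vector is a $\fp_d$-eigenvector and hence the $\fp_d$-action on the ambient tensor product preserves the $\fa_d$-submodule generated by it---which is exactly the content the paper leaves implicit.
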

\begin{proof}
The first claim is a reformulation of the definition. The second claim holds
true since all the factors are $\msl_n^{(d)}$ modules. 	
\end{proof}

In what follows we denote by $\ell_{m,M}\in L_{m,M}$ the cyclic vector;
$\ell_{m,M} = \ell_{m\om_d}\T v_M$. 

\begin{rem}
We  show in the next section that the spaces $L_{m,M}$ are closely
related to the closure in $\bP^{d(n-d)}\times \Gr(d,n)$ of the 
graph $\G(d,n)$ of the birational exponential  map  $\bP^{d(n-d)}\to {\rm Gr}(d,n)$.   
\end{rem}

\subsection{Bases}
We consider the real vector space $\bR^{d(n-d)}$ whose coordinates are labeled
by the roots $\al_{i,j}$, $1\le i\le d\le j<n$ of $\fa_d$, or simply by the 
corresponding pairs $i,j$.
\begin{dfn}\label{polytopes}
Let $m,M\ge 0$.
The polytope $X(m,M)\subset\bR_{\ge 0}^{d(n-d)}$ is the set of
 points
$(s_{i,j})$ with nonnegative coordinates satisfying the following inequalities
for all $r\ge 1$:
\begin{equation}\label{inequalities}
\sum_{\al\in \bp_1\cup\ldots\cup \bp_r} s_\al\le rm+M 
\quad \text{ for all tuples of Dyck paths } \bp_1,\dots,\bp_r.
\end{equation}
We set $S({m,M})=X(m,M)\cap \bZ^{d(n-d)}$.
 \end{dfn}

\begin{rem}
One has an obvious inclusion $X({m,M})\subset X(m+M,0)$, where $X(m+M,0)$ coincides 
with the FFLV polytope of weight $(m+M)\om_d$. 	
\end{rem}

\begin{lem}
In Definition \ref{polytopes} it suffices to consider inequalities 
\eqref{inequalities} with $r\le\min(d,n-d)$.  
\end{lem}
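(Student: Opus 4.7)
The plan is to show that every inequality indexed by $r > \min(d,n-d)$ Dyck paths is already implied by an inequality indexed by at most $\min(d,n-d)$ Dyck paths, via a width argument combined with Dilworth's theorem.

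First, I would identify the poset $R(d)$ with the $d \times (n-d)$ grid poset. By Remark \ref{chain}, $R(d)$ is the set $\{\al_{i,j} : 1 \le i \le d \le j \le n-1\}$ with the product order, and Dyck paths are exactly its maximal chains. Such a grid poset has width $w(R(d)) = \min(d,n-d)$ (the antichains are the ``anti-diagonals''), and, crucially, every chain in $R(d)$ can be extended to a maximal chain: one fills in gaps between consecutive comparable elements by a monotone lattice path, then extends downward to the unique minimum $\al_{1,d}$ and upward to the unique maximum $\al_{d,n-1}$.

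Next, fix $r > \min(d,n-d)$ and arbitrary Dyck paths $\bp_1,\dots,\bp_r$, and set $P' = \bp_1\cup\cdots\cup\bp_r \subset R(d)$. Since $w(P') \le w(R(d)) = \min(d,n-d)$, Dilworth's theorem gives a covering of $P'$ by $w(P')$ chains $C_1,\dots,C_{w(P')}$. Extending each $C_i$ to a maximal chain $\tilde\bp_i$ (a Dyck path) as above, we obtain Dyck paths $\tilde\bp_1,\dots,\tilde\bp_{w(P')}$ with
\[
P' \;\subseteq\; \tilde\bp_1 \cup \cdots \cup \tilde\bp_{w(P')}.
\]

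Finally, assuming the inequalities \eqref{inequalities} hold for all $r' \le \min(d,n-d)$, and applying the one for $r' = w(P')$ to the tuple $(\tilde\bp_1,\dots,\tilde\bp_{w(P')})$, I get
\[
\sum_{\al\in \bp_1\cup\cdots\cup\bp_r} s_\al
= \sum_{\al\in P'} s_\al
\le \sum_{\al\in \tilde\bp_1\cup\cdots\cup\tilde\bp_{w(P')}} s_\al
\le w(P')\,m + M
\le r\,m + M,
\]
which is exactly the inequality for the original tuple $(\bp_1,\dots,\bp_r)$. Hence the inequalities with $r > \min(d,n-d)$ are redundant.

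The only nontrivial point is the chain-extension step and the computation $w(R(d)) = \min(d,n-d)$, but both are standard facts about the $d\times(n-d)$ grid poset; the rest is a direct application of Dilworth's theorem and nonnegativity of the coordinates $s_\al$.
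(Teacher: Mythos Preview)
Your argument is correct, but it takes a more elaborate route than the paper's. The paper simply exhibits, for $r_0 = \min(d,n-d)$, an explicit tuple of $r_0$ Dyck paths whose union is all of $R(d)$; the corresponding inequality $\sum_{i\le d\le j} s_{i,j}\le r_0 m + M$ then dominates every inequality with $r>r_0$, since the left-hand side for any union of paths is bounded by the total sum and the right-hand side $rm+M$ only increases with $r$. No Dilworth and no chain extension are needed.

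Your approach, by contrast, proves a sharper statement: for \emph{any} tuple $\bp_1,\dots,\bp_r$ you replace the inequality by one indexed by $w(\bp_1\cup\cdots\cup\bp_r)\le\min(d,n-d)$ paths, not just by the single ``full grid'' inequality at $r_0$. This is a genuine strengthening (it shows each inequality is implied by one with the minimal possible $r$), but for the lemma as stated the paper's one-line covering argument suffices and avoids the appeal to Dilworth's theorem and the chain-extension step.
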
	
\begin{proof}
We note that if $r=	\min(d,n-d)$, then there exist $r$ Dyck paths 
$\bp_1,\dots,\bp_r$ such that $\bigcup_{i=1}^r \bp_i$ cover the whole set of roots
$\al_{i,j}$, $i\le j\le d$. In fact, without loss of generality we assume that $d\le n-d$
(i.e.\ $r=d$). Then we take
\[
\bp_i=\{\al_{1,d},\dots,\al_{i,d},\al_{i,d+1},\dots,\al_{i,n-1},\al_{i+1,n-1},
\dots, \al_{d,n-1}\}.
\] 
The inequality corresponding to the paths $\bp_1,\dots,\bp_r$ reads as
$\sum_{i\le d\le j} s_{i,j}\le md+M$, which implies all inequalities 
\eqref{inequalities} for $r>d$. 
\end{proof}

\begin{rem}
If $M=0$ then all inequalities \eqref{inequalities} are implied by
the $r=1$ inequalities.
\end{rem}

Our goal is to show that the following properties hold:
\begin{itemize}
	\item $f^\bs \ell_{m,M}$, $\bs\in S(m,M)$ form a basis of $L_{m,M}$;
	\item $S(m,M)+S(m',M')=S(m+m',M+M')$, where $+$ is the Minkowski addition;
	\item $L_{m,M}$ is cyclic $\msl_n^{(d)}$ module with defining relations \begin{equation}\label{relations}
		f_{i_1,j_1}^{b_1}\dots f_{i_r,j_r}^{b_r}\ell_{m,M}=0\ \text{ if }\ b_1+\dots+b_r>rm+M.
	\end{equation}	
\end{itemize}

\begin{rem}\label{notonly}
Our goal is to show that relations \eqref{relations} are defining relations 
of $L_{m,M}$ as 
$\msl_n^{(d)}$ module. However, they are not defining if $L_{m,M}$
is considered only as an $\fa_d$ module. In other words,
the defining relations of the $\fa_d$ module  $L_{m,M}$ are obtained from
\eqref{relations} applying the universal enveloping algebra $\U(\fp_d)$. 	
\end{rem}

\begin{lem}
The three properties above  hold true for $M,M'=0$.
\end{lem}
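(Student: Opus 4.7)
The plan is to deduce all three properties from the FFLV theorem of \cite{FFL1}. When $M = 0$ the module $V_0$ is one-dimensional with trivial $\msl_n^{(d)}$-action, so $L_{m,0} \simeq \mathrm{gr}\, L_{m\omega_d}$ as $\msl_n^{(d)}$-modules, with $\ell_{m,0}$ corresponding to $\ell_{m\omega_d}$. By the preceding remark, the defining inequalities of $X(m,0)$ reduce to the $r = 1$ case $\sum_{\al \in \bp} s_\al \leq m$ for Dyck paths $\bp$, so $S(m,0)$ coincides with the FFLV set $S_{m\omega_d}$ from \eqref{FFLV}.

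Two of the three properties are then immediate. The Minkowski sum identity $S(m,0) + S(m',0) = S(m+m',0)$ is tautological from Example \ref{M=0}: $S(m,0)$ is precisely the set of sums of $m$ indicator functions of antichains in $R(d)$, and adding a sum of $m$ such indicators to a sum of $m'$ such indicators produces a sum of $m+m'$ indicators, with the converse equally clear. The basis statement for $L_{m,0}$ is the FFLV theorem for the highest weight $m\omega_d$: the monomials $\{f^\bs \ell_{m\omega_d}\}_{\bs \in S_{m\omega_d}}$ form a basis of $L_{m\omega_d} \cong L_{m,0}$.

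The substantive claim is the one about defining relations. I would form the cyclic $\msl_n^{(d)}$-module $\tilde L$ generated by a highest weight vector $\tilde\ell$ of weight $m\omega_d$ subject to the relations \eqref{relations} with $M = 0$, yielding a canonical surjection $\tilde L \twoheadrightarrow L_{m,0}$. A direct computation on $\ell_{m\omega_d} = (\ell_1 \wedge \cdots \wedge \ell_d)^{\otimes m}$ shows that every non-Cartan matrix unit in $\fp_d$ either annihilates $\ell_{m\omega_d}$ or produces a wedge with a repeated index; hence $\fp_d \ell_{m\omega_d} \subset \bC \ell_{m\omega_d}$, and by construction the same holds for $\tilde\ell$. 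The PBW decomposition $\U(\msl_n^{(d)}) = \U(\fa_d)\U(\fp_d)$ then shows that $\tilde L = \U(\fa_d)\tilde\ell$ is linearly spanned by $\{f^\bs \tilde\ell\}_{\bs \in \bZ_{\geq 0}^{R(d)}}$. Using the $r = 1$ cases of \eqref{relations} (i.e.\ the Dyck path relations) together with the straightening algorithm of \cite{FFL1}, any such monomial reduces to a linear combination of $\{f^\bs \tilde\ell\}_{\bs \in S_{m\omega_d}}$, giving $\dim \tilde L \leq |S(m,0)| = \dim L_{m,0}$ and hence $\tilde L \simeq L_{m,0}$.

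The main obstacle is the straightening law of \cite{FFL1}. Since $M = 0$ is precisely the PBW-degenerate highest weight setting covered in that paper, the straightening can be invoked directly as an established ingredient, and no new combinatorial input is required.
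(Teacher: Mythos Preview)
Your argument is correct and takes essentially the same approach as the paper: identify $L_{m,0}\simeq\mathrm{gr}\,L_{m\omega_d}$ and $S(m,0)=S_{m\omega_d}$, then invoke \cite{FFL1} for all three properties. The only difference is in the third property, where the paper observes directly that the $r\ge 2$ relations in \eqref{relations} are redundant when $M=0$ (since $b_1+\cdots+b_r>rm$ forces some $b_k>m$, and the $f_{i,j}$ commute) and then cites the defining-relations result of \cite{FFL1}, whereas you re-run the dimension bound via the straightening algorithm; both routes land in the same place.
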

\begin{proof}
The polytope $X(m,0)$ is the FFLV polytope for the weight $m\om_d$.
Hence the first two properties are direct corollaries from \cite{FFL1}.
To prove the third property we note that if $M=0$ then all the relations are implied by the $r=1$ relations. In fact, all $f_{i,j}$ commute and 
$b_1+\dots+b_r>rm$ implies that $b_k>m$ for some $k=1,\dots,r$.
However it was shown in \cite{FFL1} that the defining relations of 
${\rm gr} L_{m\om_d}$ are $f_{i,j}^{m+1}\ell_{m\om_d}=0$ (in fact, it suffices 
to take $f_{1,n-1}^{m+1}\ell_{m\om_d}=0$).   	
\end{proof}

\begin{lem}\label{defrel}
The relations 
$$f_{i_1,j_1}^{b_1}\dots f_{i_r,j_r}^{b_r}\ell_{m,M}=0 \text{ if } b_1+\dots+b_r>rm+M$$
hold true in $L_{m,M}$.
\end{lem}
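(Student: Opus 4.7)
Since the operators $f_{i,j}$ commute, we may first assume the pairs $(i_k,j_k)$ are distinct: if some coincide, we combine their exponents, after which the new number $r'$ of roots satisfies $r'\le r$, so the hypothesis $\sum b_k > r'm+M$ still persists. By construction $L_{m,M}=\mathrm{gr}L_{m\om_d}\odot V_M$ embeds into $\mathrm{gr}L_{m\om_d}\T V_M$ with cyclic vector $\ell_{m\om_d}\T v_M$, and the abelian Lie algebra $\fa_d$ acts on this tensor product by the Leibniz rule. Applying the binomial theorem to the commuting operators $f_{i_k,j_k}^{b_k}$ yields
\[
\prod_{k=1}^r f_{i_k,j_k}^{b_k}\,\ell_{m,M}=\sum_{\vec c}\prod_{k=1}^r\binom{b_k}{c_k}\Bigl(\prod_{k=1}^r f_{i_k,j_k}^{c_k}\ell_{m\om_d}\Bigr)\T\Bigl(\prod_{k=1}^r f_{i_k,j_k}^{b_k-c_k}v_M\Bigr),
\]
where $\vec c=(c_k)$ ranges over tuples with $0\le c_k\le b_k$.

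The strategy is to verify that every summand vanishes. Fix such a $\vec c$. The hypothesis $\sum b_k>rm+M$ forces $\sum c_k>rm$ or $\sum(b_k-c_k)>M$, since otherwise adding the two inequalities would contradict the bound. In the second case the factor in $V_M$ vanishes by definition, as $V_M$ annihilates every monomial of degree exceeding $M$. In the first case the pigeonhole principle — which is precisely where distinctness of the $(i_k,j_k)$ is used — produces some index $k$ with $c_k>m$. For this $k$ the standard $\msl_2$-relation attached to the triple $(e_{\al_{i_k,j_k}},f_{\al_{i_k,j_k}},h_{\al_{i_k,j_k}})$ gives $f_{i_k,j_k}^{m+1}\ell_{m\om_d}=0$ in $L_{m\om_d}$: indeed $\ell_{m\om_d}$ is annihilated by all raising operators and has weight $m\om_d(\al_{i_k,j_k}^\vee)=m$ for this $\msl_2$. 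This relation descends unchanged to $\mathrm{gr}L_{m\om_d}$ because the PBW grading preserves the $\fa_d$-action. Hence the first tensor factor is also zero, so every summand vanishes.

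The argument contains no real obstacle; the only step worth highlighting is the pigeonhole reduction, which is exactly what allows the coarse bound $\sum b_k>rm+M$ to suffice in place of the more intricate tuple-of-Dyck-paths inequalities defining $X(m,M)$. (The converse direction — that these relations are \emph{defining} as $\msl_n^{(d)}$-modules, together with the basis and Minkowski statements — will presumably require the polytope machinery of Section~\ref{combinatorics}, but the present lemma is just the easy half.)
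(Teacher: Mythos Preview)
Your proof is correct and mirrors the paper's: distribute the monomial across the tensor factors of $\ell_{m,M}$, then use pigeonhole to locate a vanishing factor. The paper differs only in decomposing one level further---writing $\ell_{m\om_d}=\ell_{\om_d}^{\T m}$ and deriving $f_{i,j}^{m+1}\ell_{\om_d}^{\T m}=0$ from the elementary relation $f_{i,j}^2\ell_{\om_d}=0$ rather than invoking $\msl_2$-theory---and your preliminary reduction to distinct $(i_k,j_k)$ is harmless but unnecessary, since the pigeonhole step $\sum c_k>rm\Rightarrow\exists k,\ c_k>m$ works regardless.
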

\begin{proof}
Recall that  $\ell_{m,M}=\ell_{\om_d}^{\T m} \T v_M \subset L_{m\omega_d}\odot V_M$. 
The claim of the Lemma is implied by two observations:
\begin{itemize}
\item $f^\bs v_M=0 \text{ if } \sum s_\al>M$,
\item $f_{i,j}^2 \ell_{\om_d} = 0$.	
\end{itemize}	
In fact, when applying $f_{i_1,j_1}^{b_1}\dots f_{i_r,j_r}^{b_r}$ to $\ell_{m,M}$ 
one distributes the factors of the monomial among the factors of the tensor
product $\ell_{\om_d}^{\T m}\T v_M$. Since $f^\bs v_M=0$ if $\sum s_\al>M$,
it suffices to show that 
$f_{i_1,j_1}^{b_1}\dots f_{i_r,j_r}^{b_r}  \ell_{\om_d}^{\T m}$ vanishes, provided $b_1+\dots+b_r>rm$.
In this case $b_i>m$ for some $i$. However, 
$f_{i,j}^2 \ell_{\om_d} = 0$ and hence $f_{i,j}^{m+1} \ell_{\om_d}^{\T m} = 0$.
\end{proof}

\begin{prop}\label{span}
The vectors $f^\bs\ell_{m,M}$, $\bs\in S(m,M)$ span $L_{m,M}$.
\end{prop}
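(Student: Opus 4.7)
Plan: The goal is to show that every monomial $f^\bs\ell_{m,M}$, for arbitrary $\bs\in\bZ_{\ge 0}^P$, lies in the linear span of $\{f^{\bs'}\ell_{m,M}:\bs'\in S(m,M)\}$. I would argue by induction on a monomial order on $\bZ_{\ge 0}^P$ (for instance, homogeneous lexicographic, refining the total degree $|\bs|=\sum_\al s_\al$), following the FFLV straightening technique of \cite{FFL1} adapted to the extra slack parameter $M$.

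The base $\bs=0$ is tautological, and if $\bs\in S(m,M)$ there is nothing to prove. So assume $\bs\notin S(m,M)$: by Definition \ref{polytopes} there exist Dyck paths $\bp_1,\ldots,\bp_r$ whose union $Q=\bp_1\cup\cdots\cup\bp_r$ satisfies $\sum_{\al\in Q}s_\al>rm+M$. The aim is to derive a relation of the form
\begin{equation*}
f^\bs\ell_{m,M}=\sum_{\bs'\prec\bs}c_{\bs'}\,f^{\bs'}\ell_{m,M},
\end{equation*}
after which the inductive hypothesis closes the argument.

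Such straightening relations are obtained by combining Lemma \ref{defrel} with the action of the Chevalley generators $e_{\al_i}\in\fn_+\subset\fp_d$ on $L_{m,M}$, which is available because $L_{m,M}$ is an $\msl_n^{(d)}$-module. Lemma \ref{defrel} provides vanishing relations for monomials whose exponent mass is \emph{concentrated} on few roots; its condition depends only on the size of the support. Applying appropriate divided powers of the $e_{\al_i}$ to these concentrated relations redistributes mass along chains in the poset $P=R(d)$, producing relations that express $f^\bs\ell_{m,M}$ in terms of strictly lex-smaller monomials whenever $\bs$ violates one of the Dyck path inequalities. This is the same mechanism that generates the FFLV straightening in \cite{FFL1}; the slack $M$ in Lemma \ref{defrel} enters additively and contributes the same way in every derived relation.

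The main obstacle is the combinatorial bookkeeping: one must verify that after each straightening move the exponent vectors appearing on the right are indeed strictly below $\bs$ in the chosen order, and that iterating terminates inside $S(m,M)$. This is essentially the content of the FFLV induction, now extended from the pure $\fa_d$-cyclic vectors $\ell_{m\om_d}$ to the enlarged cyclic vectors $\ell_{m,M}=\ell_{m\om_d}\otimes v_M$. The additional $V_M$-factor simply absorbs up to $M$ extra units of exponent without otherwise disturbing the inductive structure.
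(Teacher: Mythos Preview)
Your overall strategy matches the paper's: both use a homogeneous lexicographic induction, both invoke Lemma~\ref{defrel} as the source of vanishing relations, and both use the $\fp_d$-action to derive straightening relations from those. However, your proposal has a genuine gap at the key technical step.

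The FFLV argument in \cite{FFL1} that you cite handles only the case $r=1$: a single violated Dyck-path inequality, starting from a relation of the form $f_{1,n-1}^{b}\ell_{m\om_d}=0$ and spreading it along one path via derivations. For general $M$, a violated inequality involves a \emph{union} $Q=\bp_1\cup\cdots\cup\bp_r$ of $r$ Dyck paths, and the seed relation from Lemma~\ref{defrel} is a product $f_{i_1,j_1}^{b_1}\cdots f_{i_r,j_r}^{b_r}\ell_{m,M}=0$. You do not say which roots $(i_k,j_k)$ and which exponents $b_k$ to choose, and more importantly, when one applies a derivation $\pa_\al$ (coming from $\fp_d$) to spread one factor along its path, the Leibniz rule acts on \emph{all} factors simultaneously. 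Saying ``this is the same mechanism'' as in \cite{FFL1} is not enough: you must explain why the derivations used to unravel one factor do not disturb the others, and why $f^{\bs}$ emerges as the lex-leading term after all $r$ stages.

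The paper supplies exactly this missing device. It reorganises $\bp_1\cup\cdots\cup\bp_r$ into pairwise \emph{disjoint} dense chains $\overline{\bp}_1,\dots,\overline{\bp}_r$ whose extreme roots $\al_{i_k,d}$ and $\al_{d,j_k}$ are \emph{nested}: $i_1<\cdots<i_r$ and $j_1>\cdots>j_r$. One then sets $b_k=\sum_{\al\in\overline{\bp}_k}s_\al$, so that $\sum b_k>rm+M$, and processes the chains from $k=r$ down to $k=1$. At stage $k$ only derivations of the form $\pa_{\bullet,j_k}$ and $\pa_{i_k,\bullet}$ are used; the nesting guarantees these annihilate $f_{i_{k'},j_{k'}}$ for all $k'<k$ and leave the already-processed chains (for $k'>k$) untouched. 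This non-interference is what makes the multi-path straightening go through, and it is the genuinely new content beyond \cite{FFL1}. Your proposal correctly identifies the bookkeeping as ``the main obstacle'' but does not provide this idea; without it, the argument is incomplete.
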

\begin{proof}
Thanks to Lemma \ref{defrel} it is enough to show that any vector 
$f^\bt\ell_{m,M}$, $\bt\notin S(m,M)$ can be rewritten as a linear combination
of vectors $f^\bs\ell_{m,M}$, $\bs\in S(m,M)$ using relations from Lemma 
\ref{defrel} (as we mentioned in Remark \ref{notonly} we use all the relations
\eqref{relations} and their $\fp_d$ consequences). 

Recall the idea of the proof of a similar statement from \cite{FFL1}, which
is the $M=0$, $r=1$ case.
Let us (totally) order the root vectors as follows:
\begin{equation}\label{order}
f_{d,n-1}\succ f_{d-1,n-1}\succ\dots  \succ f_{1,n-1} \succ f_{d,n-2}\succ\dots \succ f_{1,n-2} \succ\dots\succ f_{1,d}.
\end{equation} 
In other words, $f_{i,j}\succ f_{i',j'}$ if and only if $j>j'$ or 
($j=j'$ and $i>i'$); note that this total order refines the partial order from Remark \ref{order}.
In what follows we consider the associated homogeneous 
(with respect to total degree -- the sum of all exponents) lexicographic order
on the set of monomials $f^\bt$.
It was shown in \cite{FFL1} that if $\bt\notin S(m,0)$, then $f^\bt\ell_{m,0}$
can be rewritten as a linear combination of smaller monomials (of the same total degree). 
Since there are obviously only finitely many monomials smaller than a given one, we obtain that $f^\bs\ell_{m,0}$, $\bs\in S(m,0)$ span the space $L_{m,0}$. We generalize this argument to the case of general $M$.

We prove that 
if $\bt$ does not satisfy some inequality from  \eqref{inequalities}, then 
$f^\bt$ can be rewritten as a linear combination of smaller monomials (with
respect to the order induced by \eqref{order}).
Assume we are given an $r$-tuple of Dyck paths $\bp_1,\dots,\bp_r$. 
We assume that the union of these paths is of width $r$ (i.e.\ the union can not be covered by a smaller number of paths).
We represent 
$\bp_1 \cup \dots\cup \bp_r$  as  a union of $r$ pairwise nonintersecting sets $\overline{\bp}_1,\dots,\overline{\bp}_r$ with the following properties:
\begin{itemize}
\item each $\overline{\bp}_k$ is a dense chain (i.e.\ the are no elements between two neighbors of $\overline{\bp}_k$,
\item $\al_{1,d}\in \bp_1$, $\al_{d,n-1}\in \bp_1$,
\item there exist numbers $1=i_1<\dots <i_r\le d$ 
such that $\al_{i_k,d}\in \overline{\bp}_k$ and $i_k$ is the smallest number with this property 
(for all $k\in [r]$),
\item
there exist numbers  $n-1=j_1>\dots > j_r\ge d$ such that $\al_{d,j_k}\in \overline{\bp}_k$
$j_k$ is the largest number with this property 
(for all $k\in [r]$).
\end{itemize}
Now let $b_k=\sum_{\al\in\overline{\bp}_k} s_\al$.
Then by assumption $b_1+\dots +b_r > mr+M$ and hence
by Lemma \ref{defrel} one has: $f^{b_1}_{i_1,j_1}\dots f^{b_r}_{i_r,j_r}\ell_{m,M}=0$. 
Now our goal is to show that $\U(\msl_n^{(d)})f^{b_1}_{i_1,j_1}\dots f^{b_r}_{i_r,j_r}\ell_{m,M}$
contains a vector which is equal to $f^\bt\ell_{m,M}$ plus some linear combination of smaller monomials applied to $\ell_{m,M}$. 

We follow the strategy of \cite{FFL1}. More precisely, we 
consider the derivations $\pa_\al$ of the polynomial ring in variables $f_{\be}$ defined by  $\pa_\al f_\be = f_{\be-\al}$ if $\be-\al$ is a root 
and zero otherwise. We denote by $\pa_{i,j}$ the operators $\pa_{\al_{i,j}}$. 
We start with $f^{b_r}_{i_r,j_r}$ and the Dyck path $\overline{\bp}_r$. Applying certain linear combination of the products of operators $\pa_{\al}$ to
$f^{b_r}_{i_r,j_r}$ we obtain $\prod_{\al\in \overline{\bp}_r} f_\al^{t_\al}$ plus linear combination of smaller monomials.
An important thing is that we only use the operators of the form 
$\pa_{\bullet,j_r }$ and $\pa_{i_r, \bullet}$. Therefore,   
since $i_1<\dots<i_r$ and $j_1>\dots >j_r$, the operators $\pa_\al$ we use at this stage do not affect $f^{b_k}_{i_k,j_k}$ for $k<r$.
As a result we obtain that 
\[
\prod_{k=1}^{r-1} f^{b_k}_{i_k,j_k}
\prod_{\al\in  \overline{\bp}_r} f_\al^{t_\al}\ell_{m,M}
\]
can be expressed as a linear combination of smaller monomials applied to $\ell_{m,M}$.

We then proceed with $f^{b_{r-1}}_{i_{r-1},j_{r-1}}$ and the Dyck path $\overline{\bp}_{r-1}$. As in the previous case, we apply the derivation 
operators to $f^{b_{r-1}}_{i_{r-1},j_{r-1}}$ and obtain $\prod_{\al\in \overline{\bp}_{r-1}} f_\al^{t_\al}$ modulo a linear combination 
of smaller terms. The derivation we use at this stage do not affect 
$f^{b_a}_{i_a,j_a}$ for $a<r-1$ and do not affect the result of application of derivation operators to $f^{b_r}_{i_r,j_r}$ from the previous stage. As a result at this point we obtain that 
\[
\prod_{k=1}^{r-2} f^{b_k}_{i_k,j_k}
\prod_{\al\in \overline{\bp}_{r-1} \cup  \overline{\bp}_r} f_\al^{t_\al}\ell_{m,M}
\]
can be expressed as a linear combination of smaller monomials applied to $\ell_{m,M}$.

We proceed in the same way with $f^{b_{r-2}}_{i_{r-2},j_{r-2}}$, etc. and 
as a result obtain that
\[
\prod_{\al\in \overline{\bp}_1 \cup \dots \cup  \overline{\bp}_r} f_\al^{t_\al}\ell_{m,M}
\]
can be expressed as a linear combination of smaller monomials applied to $\ell_{m,M}$. Hence the same property holds true for the whole monomial
$f^\bt \ell_{m,M}$, as desired.
\end{proof}	

\begin{thm}
Vectors $f^\bs \ell_{m,M}$, $\bs\in S(m,M)$ form a basis of $L_{m,M}$ and
relations \eqref{relations}
are defining for the cyclic $\msl_n^{(d)}$ module $L_{m,M}$.
\end{thm}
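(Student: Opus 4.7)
The plan is to combine the spanning statement of Proposition \ref{span} with a direct nonvanishing argument that exploits the Minkowski sum property from Theorem \ref{Msum}. The three tasks are (a) linear independence of $\{f^\bs\ell_{m,M}\}_{\bs\in S(m,M)}$, from which the basis statement follows together with Proposition \ref{span}, and (b) promoting \eqref{relations} to a set of defining relations.

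For (a), I would equip $L_{m,M}$ with the natural $\bZ^P$-grading inherited from its realization inside $L_{\omega_d}^{\otimes m}\otimes V_1^{\otimes M}$. The tensor product of the FFLV basis of $L_{\omega_d}$ with the basis $\{v\}\cup\{f_\alpha v\}_{\alpha\in P}$ of $V_1$ gives a distinguished basis of the ambient space, and I would place a basis vector $(f^{\bc_1}\ell_{\omega_d})\otimes\cdots\otimes(f^{\bc_m}\ell_{\omega_d})\otimes(f^{\bc_{m+1}}v)\otimes\cdots\otimes(f^{\bc_{m+M}}v)$ in multi-degree $\sum_t\bc_t$. Under this grading each $f_\alpha$ is homogeneous of degree $e_\alpha$, so $L_{m,M}=\U(\fa_d)\ell_{m,M}$ is a graded subspace, and $f^\bs\ell_{m,M}$ lies in multi-degree $\bs$. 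Given $\bs\in S(m,M)$, Theorem \ref{Msum} produces a Minkowski decomposition $\bs=\bc_1+\cdots+\bc_m+\bc_{m+1}+\cdots+\bc_{m+M}$ with $\bc_t\in S(1,0)$ for $t\le m$ and $\bc_t\in S(0,1)$ for $t>m$. Expanding $f^\bs=\prod_\alpha f_\alpha^{s_\alpha}$ binomially in the tensor product and using $f_\alpha^2\ell_{\omega_d}=0$ together with $f^\bc v=0$ for $\sum_\alpha c_\alpha\ge 2$, one checks that the surviving distributions are exactly such Minkowski decompositions, so
\[
f^\bs\ell_{m,M}=\sum_{\text{decomp}} c_{\text{decomp}}\cdot(f^{\bc_1}\ell_{\omega_d})\otimes\cdots\otimes(f^{\bc_{m+M}}v),
\]
with all $c_{\text{decomp}}$ positive multinomial coefficients and the tensor summands pairwise distinct basis vectors. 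By Theorem \ref{Msum} the sum is nonempty, hence $f^\bs\ell_{m,M}\neq 0$. Vectors in distinct multi-degrees are automatically linearly independent, which completes (a).

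For (b), let $\widetilde L$ be the universal cyclic $\msl_n^{(d)}$-module with generator $\widetilde\ell$ modulo the relations \eqref{relations}. By Lemma \ref{defrel} these relations hold in $L_{m,M}$, so $\widetilde\ell\mapsto\ell_{m,M}$ defines a surjection $\widetilde L\twoheadrightarrow L_{m,M}$. The reduction argument in Proposition \ref{span} uses only the relations \eqref{relations} and their $\fp_d$-consequences, so it applies verbatim to $\widetilde L$ and shows that $\widetilde L$ is spanned by $\{f^\bs\widetilde\ell\}_{\bs\in S(m,M)}$. Hence $\dim\widetilde L\le|S(m,M)|=\dim L_{m,M}$ by part (a), forcing the surjection to be an isomorphism.

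The substantive content sits entirely in (a), and the main obstacle there is to argue nonvanishing of $f^\bs\ell_{m,M}$ without an explicit character or dimension formula. The trick is that the combinatorial input from Theorem \ref{Msum} reduces nonvanishing to the mere existence of a Minkowski decomposition, and the multi-graded bookkeeping then upgrades the single nonvanishing statement to linear independence of the whole family at once.
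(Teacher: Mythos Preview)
There is a genuine gap in part~(a): the claimed $\bZ^P$-grading on $L_{\omega_d}^{\otimes m}\otimes V_1^{\otimes M}$ for which each $f_\alpha$ is homogeneous of degree $e_\alpha$ does not exist. Take $d=2$, $n=4$. One computes $f_{2,3}f_{1,2}\,\ell_{\omega_2}=\ell_3\wedge\ell_4=-f_{2,2}f_{1,3}\,\ell_{\omega_2}$ in $L_{\omega_2}$. If your grading placed $f_{1,2}\,\ell_{\omega_2}$ in degree $e_{\alpha_{1,2}}$ and each $f_\alpha$ raised degree by $e_\alpha$, the left side would sit in degree $e_{\alpha_{1,2}}+e_{\alpha_{2,3}}$ and the right side in $e_{\alpha_{1,3}}+e_{\alpha_{2,2}}$; these are distinct in $\bZ^P$, contradicting the relation. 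Equivalently, the defining ideal of $L_{\omega_d}$ in $\bC[f_\alpha]$ is Cartan-weight homogeneous but not $\bZ^P$-homogeneous (the roots $\alpha_{i,j}$ with $i\le d\le j$ are linearly dependent in $\h^*$ as soon as $d(n-d)>n-1$). Consequently $f^{\bs}\ell_{m,M}$ does not lie in a single multi-degree, and your automatic linear-independence step collapses. The companion claim that ``the surviving distributions are exactly such Minkowski decompositions'' is also false: for a factor $\ell_{\omega_d}$, the vector $f^{\bt}\ell_{\omega_d}$ is nonzero whenever $\bt$ is a $0$--$1$ vector whose support has pairwise distinct first indices and pairwise distinct second indices, a strictly weaker condition than $\bt\in S(1,0)$; these extra terms hit the same wedge basis vectors as the Minkowski terms, with signs, so cancellation can occur and neither nonvanishing nor linear independence follows.

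The paper circumvents this by invoking the essential-basis machinery of \cite{FFL3}. The total order \eqref{order} induces a monomial \emph{filtration} (not a $\bZ^P$-grading), and the FFLV bases of $L_{1,0}$ and $L_{0,1}$ are essential with respect to it. The general result of \cite{FFL3} then guarantees that, under the Minkowski property of Theorem~\ref{Msum}, the vectors $f^{\bs}\ell_{m,M}$ for $\bs\in S(m,M)$ are linearly independent in $L_{1,0}^{\odot m}\odot L_{0,1}^{\odot M}$. Your argument for part~(b) is correct and matches the paper's.
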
	
\begin{proof}
We use the results of \cite{FFL3}. 
The bases $f^\bs \ell_{1,0}$, $\bs\in S(1,0)$ and $f^\bs \ell_{0,1}$, $\bs\in S(0,1)$ are essential bases with respect to the total order 
\eqref{order}. 
Since $L_{m,M}$ is defined as cyclic product of 
$L_{1,0}$ ($m$ times) and $L_{0,1}$ ($M$ times), 
we know (see Theorem \ref{Msum})  that the Minkowski sum $S(m_1,M_1)+S(m_2,M_2)$ is equal to $S(m_1+m_2,M_1+M_2)$ for any $m_1,m_2,M_1,M_2$.
Hence \cite{FFL3} imply that vectors $f^\bs \ell_{m,M}$, $\bs\in S(m,M)$ are linearly independent in $L_{m,M}$. Thanks to Proposition \ref{span} we 
obtain the first claim of our theorem. To prove the second claim, recall that 
we've shown in the proof of Proposition \ref{span} that relations \eqref{relations}
allow to rewrite any monomial in terms of $f^\bs \ell_{m,M}$, $\bs\in S(m,M)$.
Hence these are the defining relations. 
\end{proof}

\section{Geometry}\label{geometry}
Let us define the main geometric object we are interested in.
We consider the Grassmannian $\Gr(d,n)$ of $d$-dimensional subspaces
of an $n$-dimensional vector space. This is a smooth projective algebraic
variety of dimension $d(n-d)$. The Grassmannian admits an affine paving by
Bruhat cells; in particular, there is natural exponential map $\imath: \bA^{d(n-d)}\to \Gr(d,n)$
from the affine space to the Grassmannian, whose image is the 
open cell. We extend the map $\imath$ to the birational map
$\bP^{d(n-d)}\to \Gr(d,n)$ and denote by $\G(d,n)$ the closure of its graph (here we consider $\bA^{d(n-d)}$ as an open cell of $\bP^{d(n-d)}$). More precisely,
we have the following definition:

\begin{dfn}
The variety $\G(d,n)$ is a subvariety of $\bP^{d(n-d)}\times \Gr(d,n)$
defined as the closure of the set $\{(x,\imath(x)), \ x\in \bA^{d(n-d)}\}$. 
\end{dfn}

\begin{example}
If $d=1$, then  $\Gr(1,n)\simeq \bP^{n-1}$ and  $\G(1,n)\simeq \bP^{n-1}$ is 
embedded diagonally  into $\bP^{n-1}\times \Gr(1,n)$.
\end{example}

Using  Pl\"ucker embedding, one sees that the graph $\G(d,n)$ is naturally
embedded into the product of projective spaces $\bP^{d(n-d)}\times
\bP(\Lambda^d(L))$, where $L$ is the $n$-dimensional vector space with basis $\ell_i$, $i\in [n]$. Let $X_{I}=X_{i_1,\dots,i_k}$ be the homogenenous 
coordinates on $\bP(\Lambda^d(L))$ and $Y_0$, $Y_{i,j}$, 
$1\le i\le d\le j< n$ be   the homogenenous 
coordinates on $\bP^{d(n-d)}$. Then
the corresponding homogeneous coordinate ring $H$ of 
$\G(d,n)\subset \bP^{d(n-d)}\times \bP(\Lambda^d(L))$ is the quotient 
of the polynomial ring in variables $X_I$, $Y_0$, $Y_{i,j}$ by the
bi-homogeneous ideal $\I(d,n)$. 
In particular,  
$H=\bigoplus_{m,M\ge 0} H_{m,M}$, where $m$ is the total degree with respect
to the $X$-variables and $M$ is the total degree with respect to the $Y$-variables. One of our goals is to describe the spaces $H_{m,M}$.

\subsection{The group action}

Recall the Lie algebra $\msl_n^{(d)}=\fp_d\oplus\fa_d$ and its representations
$L_{m,M}$. Let $P_d\subset SL_n$ be the standard parabolic subgroup with the
Lie algebra $\fp_d$ and let $N_d\subset SL_n$ be the unipotent subgroup with 
the Lie algebra $\fa_d$. 
Recall that $\fa_d$ is endowed with the structure of $\fp_d$ module and hence
 of the $P_d$ module. Hence one obtains the group
\[
SL_n^{(d)} = P_d\ltimes \exp(\fa_d) = P_d\ltimes  N_d.
\] 
In particular, $SL_n^{(d)}$ contains a normal abelian subgroup $N_d$ and a parabolic
subgroup $P_d$. 

The following observation is simple, but crucial in our approach. Recall the 
cyclic vector $\ell_{m,M}\in L_{m,M}$. In what follows we denote by
$[\ell_{m,M}]\in\bP(L_{m,M})$ the line containing the cyclic vector.

\begin{constr}
For $m,M>0$ the orbit closure $\overline{N_d\cdot[\ell_{m,M}]} 
\subset \bP(L_{m,M})$ is isomorphic to
$\G(d,n)$. 
\end{constr}
\begin{proof}
Let us denote $\overline{N_d\cdot[\ell_{m,M}]} 
\subset \bP(L_{m,M})$ by $\G_{m,M}$. 
Recall that  $L_{m,M}=L_{1,0}^{\odot m} \odot L_{0,1}^{\odot M}$
and hence $\G_{m,M}$ sits inside $\G_{m,0}\times \G_{0,M}$
as the closure of the orbit passing through $[\ell_{m,0}]\times [\ell_{0,M}]$.
Since $\G_{m,0}$ sits diagonally inside $\G_{1,0}^{\times m}$ and
$\G_{m,0}$ sits diagonally inside $\G_{0,1}^{\times M}$, we know that
$\G_{m,M}\simeq \G_{1,1}$. We are left to show that $\G_{1,1}\simeq \G(d,n)$.

Let $a\in \fa_d$. Recall that $L_{1,0}= \gr \Lambda^d(L)$, $L_{0,1}= \mathrm{span} \{v\} \oplus \fa_d$. Then 
\[
\exp(a)([\ell_{1,0}]\times [\ell_{0,1}]) = [\exp(a)\ell_{\om_d} ]
\times [v + a] = [(\ell_1+a\ell_1)\wedge\dots\wedge  (\ell_d+a\ell_d)]\times [v+a].
\]
Points $[v + a]\in \bP^{d(n-d)}$, $a\in\fa_d$  form the open dense cell
$\bA^{d(n-d)}\subset \bP^{d(n-d)}$ and  by definition
$[(\ell_1+a\ell_1)\wedge\dots\wedge  (\ell_d+a\ell_d)] =\imath(a)$
(the left hand side is the standard parametrization of the open cell in the 
Grassmannian). Hence $\G_{1,1}\simeq \G(d,n)$.
\end{proof}

\begin{cor}
The abelian unipotent group $N_d\simeq \bG_a^{d(n-d)}$ acts on $\G(d,n)$ with
an open dense orbit. The $N_d$ action on $\G(d,n)$ extends to the action of the group
$SL_n^{(d)}$.	
\end{cor}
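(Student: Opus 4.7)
The plan is to read off both assertions directly from the preceding Construction, specialized to $m=M=1$. That result identifies $\G(d,n)\simeq\G_{1,1}=\overline{N_d\cdot[\ell_{1,1}]}\subset\bP(L_{1,1})$, and under this identification the $N_d$-orbit through $[\ell_{1,1}]$ projects onto the open cell $\bA^{d(n-d)}\subset\bP^{d(n-d)}$ via the first factor. Hence the $N_d$-orbit is dense in $\G(d,n)$; comparing dimensions $\dim N_d=d(n-d)=\dim\G(d,n)$ shows that it is also open. The identification $N_d\simeq\bG_a^{d(n-d)}$ is automatic since $\fa_d$ is abelian of dimension $d(n-d)$ and the exponential map realizes it as an additive vector group.

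For the extension to $SL_n^{(d)}$ the key step is to show that $P_d$ preserves the line $[\ell_{1,1}]\in\bP(L_{1,1})$. Since $\ell_{\om_d}\in L_{\om_d}$ is the highest weight vector and $\fp_d$ is spanned by the Cartan, the Levi $\mgl_d\oplus\mgl_{n-d}$ and the upper nilradical, a direct inspection shows that the positive root vectors and the off-diagonal Levi entries either annihilate $\ell_{\om_d}$ or replace a factor in the wedge by one already present, while the Cartan acts by the character $\om_d$; hence $P_d\cdot[\ell_{\om_d}]=[\ell_{\om_d}]$. On the other hand, by the very definition of the $\fp_d$-module structure on $V_1$ (from the lemma preceding the Construction), $V_1(0)\simeq S^0(\msl_n/\fp_d)=\bC$ is the trivial $\fp_d$-module, so $\fp_d$ annihilates the cyclic vector $v_1$. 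Combining the two observations, $\fp_d$ stabilizes the line through $\ell_{1,1}=\ell_{\om_d}\T v_1$, and therefore so does the connected group $P_d$.

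With this in hand the extension is immediate. Since $N_d$ is a normal subgroup of $SL_n^{(d)}=P_d\ltimes N_d$, every $g\in SL_n^{(d)}$ may be written $g=np$ with $n\in N_d$, $p\in P_d$; as $P_d$ fixes the line, $g\cdot[\ell_{1,1}]=n\cdot[\ell_{1,1}]$, so
\[
SL_n^{(d)}\cdot[\ell_{1,1}]=N_d\cdot[\ell_{1,1}].
\]
Taking closures gives $\G(d,n)=\overline{SL_n^{(d)}\cdot[\ell_{1,1}]}$, which is automatically invariant under the whole group $SL_n^{(d)}$, so the $N_d$-action extends as required. The only step that demands an actual verification — and hence the main (and rather mild) obstacle — is the stabilization of the cyclic line by $\fp_d$, which is handled by the explicit description of the $\fp_d$-action on the two tensor factors just given.
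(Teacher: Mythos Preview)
The paper states this corollary without proof, treating it as an immediate consequence of the preceding Construction. Your argument is correct and supplies exactly the details the paper leaves implicit: the open dense $N_d$-orbit is just $\G^\circ(d,n)$ under the identification $\G(d,n)\simeq\overline{N_d\cdot[\ell_{1,1}]}$, and the extension to $SL_n^{(d)}$ follows once one checks that $P_d$ fixes the cyclic line $[\ell_{1,1}]$---which, as you correctly verify from the explicit $\fp_d$-action on the two tensor factors, is the only step requiring any inspection.
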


\subsection{The fibers}
By definition we have a projection map $\varphi:\G(d,n)\to \Gr(d,n)$. Our goal
is to describe the fibers of the map $\varphi$. To this end, we consider the following
stratification of the Grassmannians. Recall that we have fixed $n$-dimensional 
space $L$, the points of $\Gr(d,n)$ are $d$ dimensional subspaces of $L$.
\begin{definition}
Let $X_k\subset \Gr(d,n)$ be the set of subspaces $U\subset L$ such that 
\[
\dim \left(U\cap \mathrm{span} \{\ell_{d+1},\dots,\ell_n\}\right) = k.
\] 
\end{definition}

By definition,  $X_k$ is nonempty if and only if $k\le \min(d,n-d)$.

\begin{rem}
Each $X_k$ is a union of several Schubert cells in the Grassmannian.
More precisely, the Schubert cells $C_I\subset \Gr(d,n)$ are labeled by subsets 
$I\subset [n]$ such that $|I|=d$. Then $X_k$ is the union of the cells 
$C_I$ such that $|I\cap \{d+1,\dots,n\}|=k$.
In particular, $X_0$ is the open cell.     
\end{rem}

We adjust the map $\imath$ and the chosen basis $\ell_1,\dots,\ell_n$ of $L$ in such a way that  
$X_0$ is exactly the image of the map $\imath: \bA^{d(n-d)}\to \Gr(d,n)$.  One has
\[
\Gr(d,n)=\bigsqcup_{k=0}^{\min(d,n-d)} X_k,\ \overline{X_k}=\bigsqcup_{k'\ge k} X_{k'}.
\] 

\begin{example}
If $d<n-d$, then $X_d\simeq \Gr(d,n-d)$. If $d>n-d$, then $X_{n-d}\simeq \Gr(2d-n,d)$.
If $d=n-d$, then $X_d$ is a point.
\end{example}

Recall the abelian Lie algebra $\fa_d$ (abelian unipotent radicalin $\msl_n$) and
its cyclic module $V$, which is the direct sum of a one-dimensional space $V(0)$
(spanned by the cyclic vector) and the space $V(1)$ isomorphic to $\fa_d$ as a vector space. In particular, by definition $\G(d,n)$ sits inside $\bP(V)\times \Gr(d,n)$. 

Recall that $\G(d,n)$ is the closure of the open cell 
$\G^\circ(d,n)=\{(x,\imath(x)), \ x\in \bA^{d(n-d)}\}$.
\begin{rem}\label{X=Y}
Let $a=\sum a_{i,j} f_{i,j}\in\fa_d$. Then the value of $Y_{i,j}$ on 
$\exp(a)\ell_{0,1}$ and the value of $X_{[d]\setminus\{i\}\cup\{j+1\}}$ are equal to $a_{i,j}$
(note that $Y_0$ and $X_{[d]}$ are equal to one on $\G^\circ(d,n)$). 
\end{rem}

\begin{lem}	\label{boundary}
The boundary $\G(d,n)\setminus \G^\circ(d,n)$ inside $\bP(V)\times \Gr(d,n)$ 
belongs to $\bP(V(1))\times \bigsqcup_{k\ge 1} X_k$.  
\end{lem}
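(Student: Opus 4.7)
The plan is to prove the two containments (first coordinate in $\bP(V(1))$, second coordinate in $\bigsqcup_{k\ge 1}X_k$) separately, both by exploiting the standard fact that the graph of a morphism into a separated scheme is closed in the product.

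For the first coordinate, I would observe that $\G^\circ(d,n)$ is by definition the graph of the morphism $\imath:\bA^{d(n-d)}\to \Gr(d,n)$, hence is closed in $\bA^{d(n-d)}\times \Gr(d,n)$. Since $\bA^{d(n-d)}\times \Gr(d,n)$ is open in $\bP(V)\times \Gr(d,n)$ and $\G(d,n)$ is by definition the closure of $\G^\circ(d,n)$ in the latter, one gets
\[
\G(d,n)\cap (\bA^{d(n-d)}\times \Gr(d,n)) = \G^\circ(d,n).
\]
Therefore every point of the boundary $\G(d,n)\setminus \G^\circ(d,n)$ must have first coordinate in $\bP(V)\setminus \bA^{d(n-d)}=\bP(V(1))$.

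For the second coordinate, I would use the fact (built into the adjustment of $\imath$ and of the basis $\ell_1,\dots,\ell_n$ preceding the statement) that $\imath$ is an isomorphism from $\bA^{d(n-d)}$ onto the open cell $X_0\subset \Gr(d,n)$. Consequently, $\G^\circ(d,n)$ can be described just as well as the graph of the inverse morphism $\imath^{-1}:X_0\to \bP(V)$, and is therefore closed in the open subset $\bP(V)\times X_0$ of $\bP(V)\times \Gr(d,n)$. Running the same argument as above yields
\[
\G(d,n)\cap (\bP(V)\times X_0) = \G^\circ(d,n),
\]
so the second coordinate of any boundary point lies in $\Gr(d,n)\setminus X_0=\bigsqcup_{k\ge 1}X_k$. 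Combining the two statements gives the lemma.

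I do not anticipate any substantive obstacle here; the statement is essentially a bookkeeping consequence of the graph-closure definition of $\G(d,n)$ together with the identification $X_0\simeq \bA^{d(n-d)}$ via $\imath$. The only point one must take care with is to invoke separatedness of $\Gr(d,n)$ (respectively $\bP(V)$) in the appropriate direction so that the graph of $\imath$ (respectively of $\imath^{-1}$) really is closed in the relevant open subset of the ambient product.
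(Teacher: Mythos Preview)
Your argument is correct and is actually the cleanest way to establish the lemma as stated. It differs genuinely from the paper's approach. The paper works in coordinates: it writes down the bilinear relations $Y_{0}X_{[d]\setminus\{i\}\cup\{j\}}=\pm Y_{i,j-1}X_{[d]}$ (valid on $\G^\circ(d,n)$, hence on the closure) and then uses them together with Pl\"ucker relations to prove the equivalence $Y_0=0\Leftrightarrow X_{[d]}=0$ on $\G(d,n)$. Your route, by contrast, sidesteps all coordinate computations by invoking twice the fact that the graph of a morphism into a separated target is closed; this immediately yields $\G(d,n)\cap(\bA^{d(n-d)}\times\Gr(d,n))=\G^\circ(d,n)$ and $\G(d,n)\cap(\bP(V)\times X_0)=\G^\circ(d,n)$, which is exactly what is needed. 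What the paper's computational approach buys is that the explicit relations it derives are the template for the Pl\"ucker-type identities used immediately afterwards to compute the fibers $\varphi^{-1}(U_J)$; your argument is more elegant for the lemma in isolation but does not produce these equations as a by-product. Note also that the paper's proof tacitly relies on your first observation (that no boundary point has $Y_0\ne 0$) to get started, so in a sense you are making explicit a step the paper leaves implicit.
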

\begin{proof}
All the points in $\G^\circ(d,n)$ satisfy the following equations for all
$i\in [d]$, $j=d+[n-d]$:
\[
Y_{0}X_{[d]\setminus \{i\}\cup j} - (-1)^{d-i} Y_{i,j-1} X_{[d]} = 0 
\]
(see Remark \ref{X=Y}).
Hence $Y_0=0$ implies $X_{[d]}=0$. 

Now assume $X_{[d]}=0$. Then we claim that
$Y_{0}$ also vanishes on a point of $\G(d,n)$. If $Y_{0}\ne 0$, then
$X_{[d]\setminus \{i\}\cup j}=0$ for all  $i\in [d]$, $j\in d+[n-d]$.
Let us show that actually the condition $Y_{0}\ne 0$ implies vanishing of all
the Pl\"ucker coordinates $X_I$. In fact, assume $I=[d]\setminus A\cup B$,
where $A\subset [d]$, $B\subset d+[n-d]$. Let $a\in A$. Then among Pl\"ucker relations
one has (see e.g. \cite{Fu})
\[
X_{[d]}X_I = \sum_{b\in B}\pm X_{[d]\setminus\{a\}\cup b} X_{I\cup \{a\}\setminus \{b\}}
\]
and hence the following polynomial vanishes on $\G^\circ(d,n)$
\[
Y_{0}X_I = \sum_{b\in B}\pm Y_{a,b-1} X_{I\cup \{a\}\setminus \{b\}}.
\] 
By induction on the cardinality of $B$ we may assume that the right hand side 
vanishes. Hence if $Y_0\ne 0$, then all $X_I=0$, which is not possible.   

To conclude the proof we note that the zero set of $Y_0$ in $\bP(V)$
is $\bP(V(1))$ and the zero set of $X_{[d]}$ in $\Gr(d,n)$ is  
$\bigsqcup_{k\ge 1} X_k$.
\end{proof}	

Let us fix the decomposition 
$L=L^-\oplus L^+$, where 
\[
L^-=\mathrm{span}\{\ell_1,\dots,\ell_d\}, \ L^+=\mathrm{span}\{\ell_{d+1},\dots,\ell_n\}.
\]
Then $V(1)$ and $\fa_d$ are naturally identified with the space 
$\mathrm{Hom}(L^-,L^+)$ of linear maps from $L^-$ to $L^+$. 
Hence Lemma \ref{boundary} gives a natural embedding $\G(d,n)\setminus \G^\circ(d,n)\subset 
\bP(\mathrm{Hom}(L^-,L^+))\times \Gr(d,n)$. 

Let $\mathrm{pr}^-$ be the projection from $L$ to $L^-$ along $L^+$.
Recall the projection $\varphi:\G(d,n)\to\Gr(d,n)$. For a vector $f$ we denote by 
$[f]=\mathrm{span}\{f\}$ the line in the corresponding projective space. 

\begin{thm}\label{main}
For $U\in\Gr(d,n)$ the preimage $\varphi^{-1}(U)$ is a projective subspace in 
$\bP(\mathrm{Hom}(L^-,L^+))$. A pair $([f],U)$ belongs to 
$\varphi^{-1}(U)$ for a linear map $f$  
if and only if the following conditions hold:
\[
\mathrm{Im} f \subset U\cap L^+,\qquad \mathrm{pr}^-(U) \subset \ker f .
\] 
\end{thm}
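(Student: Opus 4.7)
Let $F(U)\subset \bP(\Hom(L^-,L^+))$ denote the locus cut out by $\mathrm{Im}\,f\subset U\cap L^+$ and $\mathrm{pr}^-(U)\subset \ker f$; these linear conditions intersect in $\Hom(L^-/\mathrm{pr}^-(U),\ U\cap L^+)$, a subspace of dimension $k^2$ when $U\in X_k$, so $F(U)\simeq\bP^{k^2-1}$ (and is empty when $k=0$). For $U\in X_0$ the fiber $\varphi^{-1}(U)$ is the unique point of $\G^\circ(d,n)$ above $U$, lying outside $\bP(V(1))$; for $U\in X_k$ with $k\ge 1$, Lemma \ref{boundary} places $\varphi^{-1}(U)$ inside $\bP(V(1))=\bP(\Hom(L^-,L^+))$, and the theorem reduces to the identity $\varphi^{-1}(U)=F(U)$ in this case.

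\emph{Forward inclusion $\varphi^{-1}(U)\subset F(U)$.} By the valuative criterion applied to $\G(d,n)=\overline{\G^\circ(d,n)}$ inside the proper variety $\bP(V)\times \Gr(d,n)$, every boundary point $([f],U)$ arises as the limit, as $t\to\infty$, of graph points $([v+a(t)],\exp(a(t))L^-)$ along an algebraic family $a(t)\in\fa_d$ with leading term $t^N f$, $N\ge 1$. First,
\[
t^{-N}\exp(a(t))\ell_i \;=\; t^{-N}\ell_i+t^{-N}a(t)\ell_i
\]
lies in $\exp(a(t))L^-$ for every $t$ and converges to $f(\ell_i)$; since $\exp(a(t))L^-\to U$ in $\Gr(d,n)$, the limit $f(\ell_i)$ lies in $U$, and being in $L^+$ it lies in $U\cap L^+$, so $\mathrm{Im}\,f\subset U\cap L^+$. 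Second, for any $u\in U$ choose $u(t)=v(t)+a(t)v(t)\in\exp(a(t))L^-$ with $v(t)\in L^-$ converging to $u$; projecting to $L^\pm$ gives $v(t)\to\mathrm{pr}^-(u)$ and boundedness of $a(t)v(t)$. But $a(t)v(t)$ has leading asymptotic $t^N f(\mathrm{pr}^-(u))$, which can remain bounded only if $f(\mathrm{pr}^-(u))=0$; varying $u$ yields $\mathrm{pr}^-(U)\subset\ker f$.

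\emph{Reverse inclusion $F(U)\subset \varphi^{-1}(U)$.} For $[f]\in F(U)$ we produce an explicit realizing family. Pick a basis $e_1,\dots,e_{d-k}$ of $\mathrm{pr}^-(U)$, extend to a basis $e_1,\dots,e_d$ of $L^-$, and choose $\psi:\mathrm{pr}^-(U)\to L^+$ with $U=(U\cap L^+)+\{u+\psi(u):u\in\mathrm{pr}^-(U)\}$. The hypothesis forces $f$ to kill $e_1,\dots,e_{d-k}$ and to send $e_{d-k+1},\dots,e_d$ into $U\cap L^+$; reorder these last $k$ vectors so that $f(e_{d-k+1}),\dots,f(e_{d-k+r})$ are linearly independent (with $r=\mathrm{rank}\,f$) while $f(e_{d-k+r+1})=\dots=f(e_d)=0$, and pick $v_{r+1},\dots,v_k\in U\cap L^+$ completing them to a basis of $U\cap L^+$. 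Set
\[
a(t)=t^2 f+t g+\widetilde\psi,
\]
where $g:L^-\to L^+$ sends $e_{d-k+i}\mapsto v_i$ for $r<i\le k$ and is zero otherwise, and $\widetilde\psi$ extends $\psi$ by zero on the last $k$ basis vectors. The $t^2 f$ term dominates, so $[a(t)]\to[f]$. Moreover $\exp(a(t))e_i = e_i+\psi(e_i)\in U$ for $i\le d-k$, while $t^{-2}\exp(a(t))e_{d-k+i}\to f(e_{d-k+i})\in U\cap L^+$ for $i\le r$ and $t^{-1}\exp(a(t))e_{d-k+i}\to v_i\in U\cap L^+$ for $r<i\le k$; these limit vectors span $U$, so $\exp(a(t))L^-\to U$ in $\Gr(d,n)$ and $([f],U)\in\G(d,n)$.

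The main obstacle is the multi-scale behavior: the forward inclusion rules out divergence of $a(t)v(t)$ by pitting its leading power $t^N$ against its assumed boundedness, while the construction in the reverse direction deliberately uses distinct integer scales $t^2$ and $t$ so that $U\cap L^+$ is reconstructed partly from $\mathrm{Im}\,f$ and partly from the auxiliary map $g$.
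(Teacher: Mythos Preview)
Your argument is correct, with one small slip: in the reverse inclusion you cannot in general arrange $f(e_{d-k+r+1})=\dots=f(e_d)=0$ merely by \emph{reordering} the last $k$ basis vectors (take $k=2$, $r=1$, $f(e_{d-1})=f(e_d)\ne 0$). What you need is to choose the extension of $e_1,\dots,e_{d-k}$ to a basis of $L^-$ so that $e_{d-k+r+1},\dots,e_d\in\ker f$, which is possible because $\ker f\supset\mathrm{pr}^-(U)$ has dimension $d-r\ge d-k$. With this adjustment the construction goes through verbatim.

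Your route is genuinely different from the paper's. The paper first settles the case of coordinate subspaces $U_J=\mathrm{span}\{\ell_j:j\in J\}$: for the forward inclusion it writes down explicit Pl\"ucker-type relations such as $Y_{a,b-1}X_J=\pm\delta_{a\in J}Y_0X_{J\setminus\{a\}}+\sum_j\pm Y_{a,j-1}X_{J\setminus\{j\}\cup\{b\}}$ valid on $\G(d,n)$ and evaluates them at $([f],U_J)$; for the reverse inclusion it uses a single-scale family $[v+tf]$, first for $f$ of full rank and then by closure. The general $U$ is then obtained by transporting along the action of $N=(N^{(1)}\times N^{(2)})\ltimes N_d\subset SL_n^{(d)}$, which is $\varphi$-equivariant and acts on $V(1)\simeq\Hom(L^-,L^+)$ by $f\mapsto g^{(2)}\circ f\circ (g^{(1)})^{-1}$. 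By contrast, you bypass both the Pl\"ucker calculus and the group action, handling arbitrary $U$ directly via curve limits; your multi-scale family $t^2f+tg+\widetilde\psi$ neatly replaces the paper's full-rank-plus-closure trick. The paper's approach integrates with the $\msl_n^{(d)}$-equivariant structure used throughout the article; yours is more self-contained and coordinate-free.
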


We first prove this theorem for $U$ being a span of basis vectors $\ell_i$ and then deduce the general case.
Let us fix $J\in\binom{[n]}{d}$ and let $U_J=\mathrm{span}\{\ell_j,j\in J\}$.

\begin{prop}\label{fiberoverfp}
$([f],U_J)\in \varphi^{-1}(U_J)$ if and only if 
\begin{equation}\label{fcond}
\mathrm{Im} f \subset \mathrm{span}\{\ell_j, j\in J_{>d}\},\qquad 
f(\ell_j)=0 \text{ for any } j\in J_{\le d}.
\end{equation}
\end{prop}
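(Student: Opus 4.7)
The plan is to prove both implications of the asserted biconditional.

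For the ``$\Leftarrow$'' direction, I would first handle the dense open locus of $f$'s of maximal rank $k$ (equivalently, those with $\ker f = \mathrm{span}\{\ell_j : j \in J_{\le d}\}$ and $\mathrm{Im} f = \mathrm{span}\{\ell_j : j \in J_{>d}\}$). For such $f$, consider the one-parameter family $a_t = t f \in \fa_d$ and let $t \to \infty$: dividing by $t$ gives $[v + t f] \to [f]$ in $\bP(V)$, while $\imath(t f)$---being the graph of the linear map $t f : L^- \to L^+$---rotates in $\Gr(d,n)$ to $\ker f \oplus \mathrm{Im} f = U_J$. Hence $([f], U_J) \in \overline{\G^\circ(d, n)} = \G(d, n)$. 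The general case (including lower-rank $f$ satisfying only the inclusions) then follows by closedness of $\G(d, n)$, since the maximal-rank locus is Zariski dense in the projective subspace $\bP^{k^2-1} \subset \bP(V(1))$ of matrices supported on rows indexed by $J_{>d}$ and columns indexed by $[d] \setminus J_{\le d}$.

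For the ``$\Rightarrow$'' direction, suppose $([f], U_J) \in \varphi^{-1}(U_J)$ is the limit of some sequence $(v + a_t, \imath(a_t)) \in \G^\circ(d, n)$. By Lemma~\ref{boundary}, $[f] \in \bP(V(1))$, so we may write $a_t = \lambda_t F + o(\lambda_t)$ with $\lambda_t \to \infty$ and $F$ the matrix of $f$ viewed in $\mathrm{Hom}(L^-, L^+)$. Iterating the relation from Lemma~\ref{boundary} yields the identity $Y_0 X_I = \sum_{b \in I_{>d}} \pm Y_{i_0, b-1} X_{I \cup \{i_0\} \setminus \{b\}}$ on $\G(d,n)$; applied with $I = J \setminus \{i_0\} \cup \{j_0\}$ for $i_0 \in J_{\le d}$ and $j_0 \in \{d+1,\dots,n\} \setminus J_{>d}$, and evaluated at $([f], U_J)$ (where $Y_0(f) = 0$ and only the $X_J$ summand survives on the right), it yields $f_{i_0, j_0 - 1} = 0$. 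A parallel derivation, or equivalently expressing $\imath(a_t)$ as the graph of a map $U_J \to U_J^c := \mathrm{span}\{\ell_j : j \in [n] \setminus J\}$ in the $U_J$-chart of $\Gr(d,n)$ and forcing this graph map to vanish in the limit, eliminates the remaining forbidden entries of $F$ and gives the two claimed conditions on $f$.

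The main obstacle is in the ``$\Rightarrow$'' direction: the $r=1$ iteration above directly handles only entries $f_{i_0, j_0-1}$ with $i_0 \in J_{\le d}$ and $j_0 \notin J_{>d}$, whereas the remaining forbidden entries (notably those with $j_0 \in J_{>d}$) require a more delicate analysis. A cleaner alternative is to combine the partial vanishings with the bound $\dim \varphi^{-1}(U_J) \le k^2 - 1$ (coming from the fact that $\varphi^{-1}(X_k)$ lies in the codimension-$1$ boundary $\G(d, n) \setminus \G^\circ(d, n)$, together with $\dim X_k = d(n-d) - k^2$) and the containment $\bP^{k^2-1} \subset \varphi^{-1}(U_J)$ from the first direction, yielding the equality on dimensional and irreducibility grounds.
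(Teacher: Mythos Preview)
Your ``$\Leftarrow$'' direction is essentially the paper's argument: the one-parameter family $t\mapsto([v+tf],\imath(tf))$ limits to $([f],U_J)$ when $f$ has maximal rank, and closedness of $\G(d,n)$ handles the degenerate $f$'s.

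The ``$\Rightarrow$'' direction has a genuine gap. Your single Pl\"ucker-type identity (with $I=J\setminus\{i_0\}\cup\{j_0\}$, $i_0\in J_{\le d}$, $j_0\notin J_{>d}$) kills only the \emph{doubly}-forbidden entries of $f$---those with column index in $J_{\le d}$ \emph{and} row index outside $J_{>d}$---leaving both singly-forbidden blocks untouched, as you acknowledge. Your proposed ``cleaner alternative'' does not close this gap: from $\dim\varphi^{-1}(X_k)\le d(n-d)-1$ and $\dim X_k=d(n-d)-k^2$ one can only bound the dimension of the \emph{generic} fiber over $X_k$ by $k^2-1$. Upper semicontinuity of fiber dimension goes the wrong way for your purposes: special fibers may be strictly larger, and $U_J$, being a torus-fixed point lying in the closed $L_d$-orbit inside $X_k$, is as special as possible. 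Even granting the bound $\dim\varphi^{-1}(U_J)\le k^2-1$, the inclusion $\bP^{k^2-1}\subset\varphi^{-1}(U_J)$ would not force equality without irreducibility of $\varphi^{-1}(U_J)$, which you have not established either.

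The paper's fix is exactly what you gesture at as a ``parallel derivation,'' but carried out explicitly with a \emph{second} Pl\"ucker relation. For $b>d$, $b\notin J$ and arbitrary $a\in[d]$, the relation expanding $X_{[d]\setminus\{a\}\cup\{b\}}X_J$ converts on $\G(d,n)$ to an identity whose evaluation at $([f],U_J)$ yields $Y_{a,b-1}(f)=0$; this kills every entry with row index outside $J_{>d}$. For $a\in J_{\le d}$ and arbitrary $b>d$, the relation expanding $X_J X_{[d]\setminus\{a\}\cup\{b\}}$ by exchanging an element of $J$ (rather than of $[d]$) similarly yields $Y_{a,b-1}(f)=0$, killing every entry with column index in $J_{\le d}$. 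Together these two identities eliminate all forbidden entries directly, with no appeal to dimension counts. Writing out this second relation is precisely the missing step in your argument.
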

\begin{proof}
We first show that if $f$ satisfies conditions \eqref{fcond}, then $([f],U_J)\in \varphi^{-1}(U_J)$. Our assumption implies the existence of the decomposition
\[
f=\sum_{\substack{a\in [d]\setminus J_{\le d}\\ b\in J_{>d}}} z_{b,a} E_{b,a},
\ z_{b,a}\in\bC
\]
for some coefficients $z_{b,a}$. Let us take the following family of points 
$p(t)\in \bP(V)\times \Gr(d,n)$ (recall that $v\in V$ is the spanning vector of $V(0)$):
\[
p(t)= \Bigl([v+tf], \mathrm{span}\{\ell_j, j\in J_{\le d}\} \oplus \mathrm{span}\{\ell_{a}+tf(\ell_a), a\in [d]\setminus J\}\Bigr), \quad t\in\bC. 
\]
Then $\lim_{t\to\infty} [v+tf] = [f]$, and if $\det z_{b,a}\ne 0$, then
\begin{multline*}
\lim_{t\to\infty}\bigl(\mathrm{span}\{\ell_j, j\in J_{\le d}\} \oplus \mathrm{span}\{\ell_{a}+tf(\ell_a), a\in [d]\setminus J\}\bigr)=\\
\mathrm{span}\{\ell_j, j\in J_{\le d}\} \oplus \mathrm{span}\{\ell_{b}, 
b\in J_{> d}\} = U_J.
\end{multline*}
Therefore, if $\det z_{b,a}\ne 0$, then $([f],U_J)\in \varphi^{-1}(U_J)$.
Since the preimage of a point is closed, the condition $\det z_{b,a}\ne 0$
can be omitted.

Now let us show that if $([f],U_J)\in \varphi^{-1}(U_J)$, then $f$ satisfies
\eqref{fcond}. We write
\[
f=\sum_{{a\in [d], b\in d+[n-d]}} z_{b,a} E_{b,a}
\]
and show that $z_{b,a}=0$ unless $a\in [d]\setminus J_{\le d}$ and 
$b\in J_{>d}$.

Let us take $b>d$, $b\notin J$. Then for any $a\in [d]$ one has the following Pl\"ucker relation (see e.g.\cite{Fu}):
\[
X_{[d]\setminus\{a\}\cup\{b\}} X_J = 
\pm \delta_{a\in J} X_{[d]} X_{J\setminus\{a\}} + 
\sum_{j\in J_{>d}}\pm X_{[d]\setminus\{a\}\cup\{j\}} X_{J\setminus\{j\}\cup\{b\}},   
\] 
where   $\delta_{a\in J}=1$ if $a\in J$ and vanishes otherwise.
Hence the following relations hold  on $\G(d,n)$:
\[
Y_{a,b-1} X_J = 
\pm \delta_{a\in J} Y_0 X_{J\setminus\{a\}} + 
\sum_{j\in J_{>d}}\pm Y_{a,j-1}X_{J\setminus\{j\}\cup\{b\}}.   
\]  
Evaluating this relation at the point $(f,U_J)$ one gets $z_{b,a}=Y_{a,b-1}(f)=0$
(since $X_{I}$ vanishes on $U_J$ unless $I=J$).

Now let us take $a\in J_{\le d}$. Then for any $b=d+1,\dots,n$ 
one has the following Pl\"ucker relation:
\[
X_J X_{[d]\setminus\{a\}\cup\{b\}}  = 
\delta_{b\notin J}\pm X_{J\setminus\{a\}\cup\{b\}} X_{[d]} +
\sum_{j\in [d]\setminus J_{\le d}} X_{J\setminus\{a\}\cup\{j\}}X_{[d]\setminus\{j\}\cup\{a,b\}},
\]
where   $\delta_{b\notin J}=1$ if $b\notin J$ and vanishes otherwise.
Hence the following relations hold  on $\G(d,n)$ (see Remark \ref{X=Y}):
\[
X_J Y_{a,b-1}  = 
\delta_{b\notin J}\pm X_{J\setminus\{a\}\cup\{b\}} Y_0 +
\sum_{j\in [d]\setminus J_{\le d}} X_{J\setminus\{a\}\cup\{j\}}Y_{j,b-1}.
\]  
Evaluating this relation at $([f],U_J)$ one gets $z_{b,a}=Y_{a,b-1}(f)=0$.
\end{proof}

We are now ready to prove Theorem \ref{main}.
\begin{proof}[Proof of Theorem \ref{main}]
Let us fix a point $U\in \Gr(d,n)$, $U\notin X_0$. Let $J$ be a $d$-tuple such that $U$ belongs to the Schubert cell with the center at $U_J$. 
Let $N\subset SL_n$ be the unipotent lower triangular subgroup (in particular,
$N\supset N_d$). Then there exists $g\in N$ such that $U=gU_J$. 

Recall the maximal parabolic subgroup $P_d\subset SL_n$. Then 
$P_d\cap N = N^{(1)}\times N^{(2)}$, where $N^{(1)}\subset SL_{d}$ and
$N^{(2)}\subset SL_{n-d}$ are unipotent subgroups. Clearly $N=N^{(1)}N^{(2)}N_d$ and hence  there exist elements $g^{(1)}\in 	N^{(1)}$, $g^{(2)}\in 	N^{(2)}$
and $g_d\in N_d$ such that $U=g^{(1)}g^{(2)}g_d U_J$. 

The group $N$ is naturally embedded into $SL_n^{(d)}$. In fact,
$SL_n^{(d)}=P_d\ltimes N_d$, $N^{(1)}\times N^{(2)}\subset P_d$ and 
$N=(N^{(1)}\times N^{(2)})\ltimes N_d$. Hence $N$ acts on $\G(d,n)$ and
the natural projection map  $\varphi: \G(d,n)\to \Gr(d,n)$ is $N$-equivariant.

We claim that for any $J\ne [d]$ one has 
\begin{equation}\label{shift}
g\bigl(\varphi^{-1}U_J\bigr) =
\{f:\ \mathrm{Im} f \subset (gU_J)\cap L^+,\ \mathrm{pr}^-(gU_J) \subset \ker f \}\times U.
\end{equation} 
Given an $f_J\in \fa_d\simeq \mathrm{Hom}(L^-,L^+)$ one has     
$g^{(1)}g^{(2)}g_d(f) = g^{(2)} \circ f \circ (g^{(1)})^{-1}$
(here we consider $g^{(1)}$ as an element of $\mathrm{Aut}(L^-)$ and
$g^{(2)}$ is seen as an element of $\mathrm{Aut}(L^+)$). In particular,
$g_d\in N_d$ acts trivially, because $N_d$ stabilizes $V(1)\subset V$.
Now Proposition \ref{fiberoverfp} tells us that the fiber over $U_J$ 
is of the desired form (with $U=U_J$). We have the decomposition 
$U_J=(U_J\cap L^-)\oplus (U_J\cap L^+)$ and 
\[
(gU_J)\cap L^+ = (g^{(2)}U_J)\cap L^+.,\qquad 
\mathrm{pr}^-(gU_J) = \mathrm{pr}^-(g^{(1)}U_J).
\]
This proves the desired claim.
\end{proof}

\begin{cor}
For $U\in X_k$ the preimage $\varphi^{-1}(U)$ is isomorphic to $\bP^{k^2-1}$.
In particular, the projection $\varphi:\G(d,n)\to\Gr(d,n)$ is one-to-one on $X_0\cup X_1$.
\end{cor}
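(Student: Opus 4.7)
My plan is to read the fiber off directly from Theorem \ref{main} and then do a dimension count. Fix $U \in X_k$ with $k \ge 1$. By definition $\dim(U \cap L^+) = k$, and since the restriction of $\mathrm{pr}^-$ to $U$ has kernel $U \cap L^+$, one gets $\dim \mathrm{pr}^-(U) = d - k$. Lemma \ref{boundary} guarantees that $\varphi^{-1}(U) \subset \bP(V(1)) = \bP(\Hom(L^-, L^+))$, so Theorem \ref{main} describes $\varphi^{-1}(U)$ as the projectivization of the vector space
$$W_U = \{f \in \Hom(L^-, L^+) : \mathrm{Im}\, f \subset U \cap L^+, \; \ker f \supset \mathrm{pr}^-(U)\}.$$
Any $f \in W_U$ factors canonically through the quotient $L^-/\mathrm{pr}^-(U)$, which has dimension $k$, and lands in the subspace $U \cap L^+$, also of dimension $k$. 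This yields a canonical isomorphism $W_U \simeq \Hom(L^-/\mathrm{pr}^-(U),\, U \cap L^+) \simeq \bC^{k^2}$, and projectivizing gives $\varphi^{-1}(U) \simeq \bP^{k^2-1}$. For $k = 1$ this is a single point, so $\varphi$ is one-to-one over $X_1$.

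For the remaining case $k = 0$, Theorem \ref{main} does not apply directly, since the fiber over a point of $X_0$ meets the open cell $\G^\circ(d,n)$ rather than $\bP(V(1))$. Here I would argue as follows: by Lemma \ref{boundary} the boundary $\G(d,n) \setminus \G^\circ(d,n)$ maps into $\bigsqcup_{k \ge 1} X_k$, so $\varphi^{-1}(U) \subset \G^\circ(d,n)$ for $U \in X_0$. The standard parametrization $a \mapsto ([v+a], \imath(a))$ exhibits $\G^\circ(d,n)$ as the graph of $\imath: \bA^{d(n-d)} \to X_0$, which is a bijection onto $X_0$; hence $\varphi^{-1}(U)$ is a single point for each $U \in X_0$, completing the one-to-one claim on $X_0 \cup X_1$.

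The argument is essentially a bookkeeping exercise once Theorem \ref{main} is in hand; there is no real obstacle. The only subtlety worth flagging is the separation between the $k=0$ case, handled via the explicit open-cell parametrization together with Lemma \ref{boundary}, and the cases $k \ge 1$, where the fiber sits entirely in $\bP(\Hom(L^-, L^+))$ and is controlled uniformly by the two kernel/image conditions of Theorem \ref{main}.
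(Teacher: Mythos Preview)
Your proof is correct and follows essentially the same approach as the paper's: compute $\dim(U\cap L^+)=k$ and $\dim(L^-/\mathrm{pr}^-(U))=k$, identify the fiber via Theorem~\ref{main} with $\bP(\Hom(\bC^k,\bC^k))$, and handle $X_0$ separately via the open-cell description. Your version is a bit more explicit in invoking Lemma~\ref{boundary} to pin down where the fiber lives in each case, but the argument is the same.
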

\begin{proof}
Since $U\in X_k$ one has $\dim U\cap L_+ = k = \mathrm{codim} (\mathrm{pr}^- U)$.
Hence $\varphi^{-1}(U)\simeq \bP(\mathrm{Hom}(\bC^k,\bC^k))$.
To prove the "in particular" part we note that  	
$X_0$ is the open dense cell and hence $\varphi$ is a bijection on $X_0$ 
by definition. 
\end{proof}	

\begin{example}
Let $n=4$, $d=2$. Then $X_2$ is a point $\mathrm{span}\{\ell_3,\ell_4\}$ and
the perimage $\varphi^{-1}(X_2)\simeq \bP^3$; outside $X_2$ the projection
$\varphi$ is one-to-one. 
\end{example}

\begin{example}
Let $d=2$ with an arbitrary $n$. Then $\Gr(2,n)=X_0\sqcup X_1\sqcup X_2$ and
$X_2\simeq \Gr(2,n-2)$. The preimage over a point from $X_2$ is isomorphic to 
$\bP^3$ and all other fibers are just points. 
\end{example}

\begin{example}
Assume $d\le n-d$. Then $\G(d,n)=\sqcup_{k=0}^d X_k$ and $X_d=\Gr(d,n-d)$. 
The restriction of $\varphi$ to the deepest component $X_d$ is the fiber bundle
whose fiber over a point $U\in \Gr(d,n-d)$ is equal to $\bP(\mathrm{Hom}(L^-,U))$
(i.e.\ $\bP(L_-^*\T U)$).
\end{example}

\begin{prop}
For any $k=1,\dots,\min(d,n-d)$ the dimension of the preimage $\varphi^{-1}X_k$ is
$d(n-d)-1$.
\end{prop}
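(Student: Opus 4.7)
The plan is to combine the already-established fiber description with a direct computation of $\dim X_k$, after which the dimension of $\varphi^{-1}(X_k)$ follows from the theorem on fiber dimensions. By the corollary immediately preceding the proposition, for $U\in X_k$ the fiber $\varphi^{-1}(U)$ is isomorphic to $\bP^{k^2-1}$, so every fiber of the restriction $\varphi\colon\varphi^{-1}(X_k)\to X_k$ is irreducible of the same dimension $k^2-1$. Consequently, once the dimension of $X_k$ is computed, one gets
\[
\dim\varphi^{-1}(X_k)=\dim X_k+(k^2-1).
\]

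To compute $\dim X_k$, I would parametrize $X_k$ as follows. For $U\in X_k$ set $W=U\cap L^+$ and $V=\mathrm{pr}^-(U)$, where $\mathrm{pr}^-\colon L\to L^-$ is the projection along $L^+$. Since $\ker(\mathrm{pr}^-|_U)=W$ has dimension $k$, one has $V\in\Gr(d-k,L^-)$ and $W\in\Gr(k,L^+)$. This defines a morphism $X_k\to\Gr(k,L^+)\times\Gr(d-k,L^-)$. Its fiber over a fixed pair $(V,W)$ consists of the $d$-dimensional subspaces $U\subset L$ with $U\supset W$ and with $U/W$ mapping isomorphically to $V$ under the induced map $L/W\to L^-$; equivalently, such a $U$ is the graph of a lift $V\to L/W$ of the inclusion $V\hookrightarrow L^-=L/L^+$, and any two such lifts differ by an element of $\Hom(V,L^+/W)$. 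Hence the fibers of this morphism are affine spaces of dimension $(d-k)(n-d-k)$, giving
\[
\dim X_k=k(n-d-k)+k(d-k)+(d-k)(n-d-k)=d(n-d)-k^2,
\]
after a brief expansion.

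Combining the two computations yields
\[
\dim\varphi^{-1}(X_k)=\bigl(d(n-d)-k^2\bigr)+(k^2-1)=d(n-d)-1,
\]
as claimed; the case $k=1$ is consistent since the fiber is a point and $\dim X_1=d(n-d)-1$. I do not expect a real obstacle here: the fiber description is already in hand from Theorem \ref{main}, and the parametrization of $X_k$ by $(W,V)$ together with a lift in $\Hom(V,L^+/W)$ is transparent and covers all of $X_k$ (not just a generic stratum), so applying the fiber dimension theorem to the restriction $\varphi^{-1}(X_k)\to X_k$ is routine. The only minor care needed is to note that $X_k$, although only a locally closed union of Schubert cells, still has a well-defined dimension computed by the above fibration, and that the fibers of $\varphi$ being equidimensional justifies the additivity of dimensions.
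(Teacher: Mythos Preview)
Your proof is correct and follows essentially the same approach as the paper: compute $\dim X_k = d(n-d)-k^2$ and add the fiber dimension $k^2-1$ coming from the preceding corollary. The only cosmetic difference is in how $\dim X_k$ is obtained: the paper writes $\dim X_k = \dim\Gr(k,n-d) + \dim\Gr(d-k,n-k)$ (choosing $W=U\cap L^+$ and then $U/W$ inside $L/W$), while you fibre $X_k$ over $\Gr(k,L^+)\times\Gr(d-k,L^-)$ with affine fibres $\Hom(V,L^+/W)$; both parametrizations give the same number and the paper's proof is simply more terse.
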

\begin{proof}
We note that the codimension of $X_k$ in the Grassmannian $\Gr(d,n)$ is equal to $k^2$.
In fact, 
$$\dim X_k=\dim \Gr(k,n-d) + \dim \Gr(d-k,n-k) = d(n-d) - k^2.$$
Now it suffices to note that for $U\in X_k$ the preimage of $U$ is the projective space of dimension $k^2-1$. 	
\end{proof}

\begin{example}
Let us compute the Poincar\'e polynomial of $\G(3,6)$ as the sum of products of 
Poincar\'e polynomials of $X_k$ and $\bP^{k^2-1}$. One gets $1+2q+4q^2+7q^3+10q^4+11q^5+10q^6+6q^7+3q^8+q^9$. The polynomial is not symmetric and 
hence $\G(3,6)$ is not smooth.
\end{example}

\subsection{Toric degenerations and coordinate rings}
In this subsection we collect the geometric consequences from Theorem \ref{Msum} 
and Corollary \ref{Xcor}.

\begin{cor}
The homogeneous components $H_{m,M}$ of the homogeneous coordinate ring $H$ of $\G(d,n)$ 
are isomorphic to $L^*_{m,M}$ as  $\msl_n^{(d)}$ modules.
\end{cor}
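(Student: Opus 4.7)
My plan is to build an explicit $\msl_n^{(d)}$-equivariant isomorphism $L_{m,M}^*\cong H_{m,M}$ by chasing the two embeddings of $\G(d,n)$ and exploiting cyclicity of $L_{m,M}$.

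First, since the cyclic vector $\ell_{m,M}=\ell_{1,0}^{\T m}\T\ell_{0,1}^{\T M}$ is symmetric in its tensor factors and the $\msl_n^{(d)}$-action preserves this symmetry (the abelian ideal $\fa_d$ acts by Leibniz and $\fp_d$ acts diagonally), the cyclic submodule $L_{m,M}$ is contained in $S^m(L_{1,0})\T S^M(L_{0,1})$. Dualizing this inclusion produces an equivariant surjection
\[
\pi\colon S^m(L_{1,0}^*)\T S^M(L_{0,1}^*)\twoheadrightarrow L_{m,M}^*.
\]

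Next, I will use the Segre-Veronese embedding $\bP(L_{1,0})\times\bP(L_{0,1})\hookrightarrow \bP(S^m L_{1,0}\T S^M L_{0,1})$, under which $H_{m,M}$ is by definition the image of $S^m(L_{1,0}^*)\T S^M(L_{0,1}^*)$ in the coordinate ring of the Veronese image of $\G(d,n)$. By the Construction in Section \ref{algebra}, that image equals $\overline{N_d\cdot[\ell_{m,M}]}$ and sits inside the projective subspace $\bP(L_{m,M})$. Consequently every functional in $\ker\pi$ (i.e.\ one vanishing on $L_{m,M}$) vanishes on the affine cone over $\G(d,n)$ and so lies in $\I(d,n)_{m,M}$. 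The defining surjection $S^m(L_{1,0}^*)\T S^M(L_{0,1}^*)\twoheadrightarrow H_{m,M}$ therefore factors through $\pi$, giving an equivariant surjection $\bar\pi\colon L_{m,M}^*\twoheadrightarrow H_{m,M}$.

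It remains to prove injectivity, and this is where cyclicity is used. Given $\phi\in\ker\bar\pi$, the functional $\phi$ vanishes on the affine cone over $\G(d,n)\subset\bP(L_{m,M})$; in particular $\phi(\exp(a)\ell_{m,M})=0$ for every $a\in\fa_d$. Substituting $ta$ for $a$ and extracting the coefficient of $t^k$ gives $\phi(a^k\ell_{m,M})=0$ for all $k\ge 0$ and all $a\in\fa_d$, and polarization (valid because $\fa_d$ is abelian and acts nilpotently) extends this to $\phi$ vanishing on all of $\U(\fa_d)\ell_{m,M}=L_{m,M}$; hence $\phi=0$. The only delicate ingredient is that the Veronese image of $\G(d,n)$ really lands inside $\bP(L_{m,M})$, but this is exactly the content of the Construction of Section \ref{algebra}; once that is in place, the proof is a formal consequence of cyclicity and needs no further combinatorial input (in particular no appeal to Theorem \ref{Msum}, although a dimension count via the toric degeneration from Corollary \ref{Xcor} would yield an alternative route).
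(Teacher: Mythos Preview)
Your argument is correct and takes a genuinely different route from the paper's. The paper does not argue directly: it observes that Theorem~\ref{Msum} and Corollary~\ref{Xcor} (the Minkowski property and normality of the polytopes $X(m,M)$) make the modules $L_{m,M}$ \emph{favourable} in the sense of \cite{FFL3}, and then reads off $H_{m,M}\cong L_{m,M}^*$ from the general results of that reference. Your proof, by contrast, is self-contained and uses no combinatorics of $S(m,M)$ whatsoever: you need only that $L_{m,M}$ is $\fa_d$-cyclic and that the Segre--Veronese image of $\G(d,n)$ lands in $\bP(L_{m,M})$ (this is the Construction in Section~\ref{geometry}, not Section~\ref{algebra} as you write). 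The injectivity step---expanding $\phi(\exp(ta)\ell_{m,M})$ in $t$ and polarizing---is precisely the statement that the cone over an open $\bG_a^N$-orbit through a cyclic vector spans the whole cyclic module, which is elementary and quite general. The trade-off is that the paper's route through favourable modules is a package deal that simultaneously yields the flat toric degeneration of the next corollary, so invoking \cite{FFL3} once is efficient; your argument, while cleaner for this statement in isolation, would not by itself produce the degeneration.
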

\begin{proof}
Theorem \ref{Msum}  and Corollary \ref{Xcor} imply that modules $L_{m,M}$
are favourable in the sense of \cite{FFL3}. Now the desired property is implied by 
\cite{FFL3}. 
\end{proof}	

Yet another consequence of Theorem \ref{Msum} and Corollary \ref{Xcor} (also via 
\cite{FFL3}) is the following corollary.

\begin{cor}
$\G(d,n)$ admits a flat toric degeneration to the toric varieties with the 
Newton polytopes $X(m,M)$, $m,M>0$. 
\end{cor}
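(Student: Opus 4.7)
The plan is to deduce this statement from the favourable module framework of \cite{FFL3} using the algebraic-combinatorial data already assembled. Specifically, \cite{FFL3} provides an abstract mechanism: given a cyclic module over an abelian Lie algebra equipped with (i) a monomial basis indexed by lattice points of a polytope, (ii) essentiality of the basis with respect to some homogeneous monomial order, and (iii) a Minkowski sum compatibility across the parameter family, the closure of the orbit through the cyclic line in the projectivization admits a flat degeneration to the projective toric variety attached to the polytopes.

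First I would recall that by the preceding corollary the multigraded homogeneous coordinate ring of $\G(d,n)\subset \bP(L_{1,0})\times \bP(L_{0,1})$ is $\bigoplus_{m,M\ge 0} L_{m,M}^*$. Then I would verify the three hypotheses of \cite{FFL3} for the family $L_{m,M}$: (i) is the basis theorem of Section \ref{algebra}, which indexes the monomial basis of $L_{m,M}$ by $S(m,M)$; (ii) is established in the proof of that basis theorem, where the total order \eqref{order} is used to show that monomials outside $S(m,M)$ are straightenable to smaller ones; and (iii) is precisely Theorem \ref{Msum}, with the polytope analog provided by Corollary \ref{Xcor}.

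Invoking the \cite{FFL3} framework then produces the Rees-algebra flat family whose generic fiber is $\G(d,n)$ and whose special fiber is the projective toric variety associated with the Newton polytopes $X(m,M)$. Flatness is forced because the Hilbert function $\dim L_{m,M} = |S(m,M)|$ is preserved bidegreewise under the degeneration. The one subtlety requiring attention is the normality of the toric special fiber: the lattice polytope property from Corollary \ref{Xcor} together with the integer Minkowski sum property from Theorem \ref{Msum} guarantees that the semigroup $\bigsqcup_{m,M} S(m,M)$ is saturated, so the special fiber is a genuine normal projective toric variety rather than an a priori non-normal toric scheme. Given how much work has been done in Sections \ref{combinatorics} and \ref{algebra}, the remaining task in this corollary is simply to match the abstract \cite{FFL3} setup to our concrete $\msl_n^{(d)}$-modules $L_{m,M}$.
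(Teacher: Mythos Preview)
Your proposal is correct and follows the same route as the paper: both derive the corollary from Theorem~\ref{Msum} and Corollary~\ref{Xcor} by feeding the favourable-module machinery of \cite{FFL3}. The paper's own justification is a single sentence invoking these inputs, and your write-up simply spells out the verification of the \cite{FFL3} hypotheses (monomial basis, essentiality, Minkowski additivity) in more detail; the normality remark is a welcome gloss but not strictly needed for the statement as phrased.
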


It would be very interesting to compare our description of $H_{m,M}$ and of toric degenerations
with the description of \cite{BSS}.

\appendix
\section{A proof of the Minkowski property (in collaboration with Wojciech Samotij)}
\label{sec:appendix}

Let $P$ be a finite poset. In Section \ref{combinatorics}, we defined the sets 
$S(m,M)\subseteq \bZ_{\ge 0}^P$.
\begin{thm}\label{M+1}
  $S(m,M+1) = S(m,M) + S(0,1)$ for all integers $m, M \ge 0$.
\end{thm}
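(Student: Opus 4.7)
The plan is to prove the nontrivial inclusion $S(m,M+1)\subseteq S(m,M)+S(0,1)$; the reverse inclusion is immediate, as for $\mathbf{s}\in S(m,M)$ and $\mathbf{y}\in S(0,1)$ one has $\sum_{\alpha\in P'}(s_\alpha+y_\alpha)\le mw(P')+M+\sum_{\alpha\in P'}y_\alpha\le mw(P')+M+1$ for every $P'\subseteq P$. For the harder direction, given $\mathbf{s}\in S(m,M+1)$, I call $P'\subseteq P$ \emph{tight} if $\sum_{\alpha\in P'}s_\alpha=mw(P')+M+1$, and write $\mathcal{T}$ for the family of tight subposets. If $\mathcal{T}=\emptyset$ then $\mathbf{s}\in S(m,M)$ and the decomposition $\mathbf{s}=\mathbf{s}+\mathbf{0}$ is trivial; otherwise the task reduces to exhibiting $\alpha_0\in P$ with $s_{\alpha_0}\ge 1$ such that $\alpha_0\in P'$ for every $P'\in\mathcal{T}$. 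For such an $\alpha_0$, the vector $\mathbf{s}-\mathbf{1}_{\alpha_0}$ lies in $S(m,M)$: tight sums drop by exactly one to $mw(P')+M$, while non-tight sums were already at most $mw(P')+M$.

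I would first reduce to the case $s_\alpha\ge 1$ for every $\alpha\in P$. Letting $Q=\mathrm{supp}(\mathbf{s})$, any tight $P'\in\mathcal{T}$ satisfies $\sum_{\alpha\in P'\cap Q}s_\alpha=\sum_{\alpha\in P'}s_\alpha=mw(P')+M+1\le mw(P'\cap Q)+M+1$, which forces $w(P'\cap Q)=w(P')$, so $P'\cap Q$ is tight in $Q$. Hence common elements of the tight family may be sought inside $Q$, and after replacing $P$ by $Q$ we may assume every coordinate of $\mathbf{s}$ is positive.

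The first concrete step is to prove that pairwise intersections of tight subposets are nonempty. For $P'_1,P'_2\in\mathcal{T}$, summing the tightness equations and using the identity $\sum_{\alpha\in P'_1}s_\alpha+\sum_{\alpha\in P'_2}s_\alpha=\sum_{\alpha\in P'_1\cup P'_2}s_\alpha+\sum_{\alpha\in P'_1\cap P'_2}s_\alpha$ together with the $S(m,M+1)$ constraints applied to $P'_1\cup P'_2$ and $P'_1\cap P'_2$ yields
\[
w(P'_1)+w(P'_2)\le w(P'_1\cup P'_2)+w(P'_1\cap P'_2).
\]
If $P'_1\cap P'_2=\emptyset$, then $w(P'_1\cap P'_2)=0$ and $w(P'_1\cup P'_2)\le w(P'_1)+w(P'_2)$ because any antichain in the union restricts to antichains on the two parts. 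Combined with the identity above, this forces $\sum_{\alpha\in P'_1\cup P'_2}s_\alpha=m(w(P'_1)+w(P'_2))+2(M+1)$, which exceeds $mw(P'_1\cup P'_2)+M+1$ since $M+1>0$, contradicting $\mathbf{s}\in S(m,M+1)$.

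The main obstacle is to upgrade pairwise intersection to a common element of $\bigcap_{P'\in\mathcal{T}}P'$, since width is neither sub- nor supermodular in general (the fork $a<b,\,a<c$ shows both fail) and Helly's property does not apply to arbitrary set systems. My plan is to induct on $|P|$: if $\mathcal{T}=\{P\}$, any element of $Q$ works; otherwise I would pick a proper tight $P'\subsetneq P$, apply the induction hypothesis to the induced poset $P'$ with $\mathbf{s}|_{P'}\in S_{P'}(m,M+1)$, and extract $\alpha_0\in P'$ with $\mathbf{s}|_{P'}-\mathbf{1}_{\alpha_0}\in S_{P'}(m,M)$. The delicate remaining step, which is the combinatorial heart of the argument, is verifying that this $\alpha_0$ also belongs to every $P''\in\mathcal{T}$ not contained in $P'$; I would approach this by applying the pairwise width inequality to $(P',P'')$ and using tightness of both to pin $\alpha_0$ into the shared core $P'\cap P''$ (which is nonempty by the previous step), plausibly via a Dilworth chain cover argument tracking how the excess $M+1$ in $P''$ is forced onto chains meeting $P'\cap P''$.
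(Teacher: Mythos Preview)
Your reduction to the support and the pairwise--intersection argument are both sound, but the inductive step that you flag as ``the combinatorial heart'' is a genuine gap, and the proposed Dilworth-cover patch cannot close it. The induction hypothesis, applied to a proper tight $P'$, only guarantees \emph{some} $\alpha_0$ lying in every tight subset of $P'$; nothing forces that particular $\alpha_0$ into a tight $P''\not\subseteq P'$. Concretely, take the fork $a<b$, $a<c$ with $m=2$, $M=0$, and $\mathbf s=(s_a,s_b,s_c)=(1,2,2)\in S(2,1)$, which has all coordinates positive. The tight family is $\mathcal T=\{\{a,b\},\{a,c\},\{a,b,c\}\}$, whose intersection is $\{a\}$. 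Choosing $P'=\{a,b\}$, the only tight subset of $P'$ is $P'$ itself, so the induction is equally happy to return $\alpha_0=b$; but $b\notin\{a,c\}$. There is no tightness argument available to pin $\alpha_0$ into $P'\cap P''=\{a\}$, because $\{a\}$ is \emph{not} tight ($s_a=1$ versus $2\cdot 1+1=3$), and the pairwise width inequality $w(P')+w(P'')\le w(P'\cup P'')+w(P'\cap P'')$ is strict here ($1+1<2+1$), so no equality case can be squeezed out of it. The failure of width to be submodular, which you yourself point out, is precisely what blocks the natural ``tight sets are closed under intersection'' shortcut that would rescue the induction.

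The paper avoids this obstacle entirely by recasting the problem via linear programming duality. It writes $M(z)\coloneqq\max_{P'}\bigl(\sum_{\alpha\in P'}z_\alpha-m\,w(P')\bigr)$ as the optimum of an integer program---a flow-type formulation on $P$ augmented by a global minimum $\sigma$ and maximum $\tau$---shows the constraint matrix is totally unimodular, and passes to the integral dual. An optimal dual solution $(g,h)$ with $\sum_\beta g_\beta=M(z)>0$ has some coordinate $g_\delta\ge 1$; decrementing $g_\delta$ by one yields a feasible dual solution for the instance with $z$ replaced by $z-e_\delta$, with objective $M(z)-1$. Weak duality then gives $M(z-e_\delta)\le M(z)-1$, producing the required $\delta$ directly, with no induction on tight sets and no need to control which $\delta$ is chosen beyond $g_\delta\ge 1$.
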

\begin{proof}
  The inclusion $S(m,M+1) \supseteq S(m,M) + S(0,1)$ is immediate.
  Indeed, if $x \in S(m,M)$ and $y \in S(0,1)$, then, for all $P' \subseteq P$,
  \[
    \sum_{\alpha \in P'} (x_\alpha + y_\alpha) \le m \cdot w(P') + M + 0 \cdot w(P') + 1
  \]
  and thus $x+y = (x_\alpha + y_\alpha)_{\alpha \in P} \in S(m,M+1)$.
  We will prove that the reverse inclusion is a consequence of duality of linear programming \cite{Sch}.

  Given a vector $z \in \mathbb{Z}^P$, define
  \[
    M(z) \coloneqq \max_{P' \subseteq P} \sum_{\alpha \in P'} z_\alpha - m \cdot w(P') \in \mathbb{Z}_{\ge 0}
  \]
  and note that a vector $z \in \mathbb{Z}_{\ge 0}^P$ belongs to $S(m,M)$ if and only if $M(z) \le M$.
  We will prove the following assertion.  We write $e_\delta$ to denote the vector $(\one_{\beta = \delta})_{\beta \in P} \in \mathbb{Z}_{\ge 0}^P$.
  \begin{lem}
    \label{lem:posets-main}
    For every $z \in \mathbb{Z}^P$ such that $M(z) > 0$, there exists some $\delta \in P$ such that $M(z-e_\delta) \le M(z) - 1$.
  \end{lem}

  We first show the lemma yields the inclusion $S(m,M+1) \subseteq S(m,M) + S(0,1)$.
  Fix an arbitrary $z \in S(m,M+1)$.
  We may clearly assume that $z \notin S(m,M)$, as otherwise there is nothing to prove.
  Additionally, we may assume that $z_\alpha \ge 1$ for all $\alpha \in P$, as otherwise we can replace $P$ with its subposet induced by the set $\{\alpha \in P : z_\alpha \ge 1\}$ and remove from $z$ all its zero coordinates.
  Indeed, this operation does not change the value of $M(z)$.
  Since $z \notin S(m,M)$, we have $M(z) = M+1 > 0$ and thus we may invoke Lemma~\ref{lem:posets-main} to find a $\delta \in P$ satisfying the assertion of the lemma.
  Clearly, $e_\delta$ belongs to $S(0,1)$ and $z - e_\delta \in \mathbb{Z}_{\ge 0}^P$ thanks to our assumption that $z \ge 1$.
  Further, $z - e_\delta \in S(m,M)$, as $M(z - e_\delta) \le M(z) - 1 = M$.
  We now turn to the proof of Lemma~\ref{lem:posets-main}.

  Let $\Pc$ be the poset obtained from $P$ by adding to it two new elements $\sigma$ and $\tau$ so that $\sigma \le \alpha \le \tau$ for all $\alpha \in P$.
  Fix an arbitrary vector $z \in \mathbb{Z}^P$ and consider the following integer program (P):
  \begin{align*}
    \textbf{maximise} & \quad \sum_{\substack{\alpha \in \Pc, \beta \in P \\ \alpha < \beta}} z_\beta \cdot f_{\alpha\beta} - m \cdot \sum_{\alpha \in P} f_{\alpha\tau} \\
    \textbf{subject to} & \qquad \sum_{\substack{\alpha \in \Pc \\ \alpha < \beta}} f_{\alpha\beta} = \sum_{\substack{\gamma \in \Pc \\ \beta < \gamma}} f_{\beta\gamma} \le 1 \quad \text{for all $\beta \in P$} \\
    & \qquad f_{\alpha\beta} \in \{0,1\} \quad \text{for all $\alpha, \beta \in \Pc$ with $\alpha < \beta$}.
  \end{align*}

  \begin{claim}
    \label{claim:P-value-Mz}
    The value of (P) is $M(z)$
  \end{claim}  
  \begin{proof}
    Let $P'$ be an arbitrary subposet of $P$ and let $\cC_{P'}$ be a collection of $w(P')$ pairwise-disjoint chains whose union is $P'$.
    Adding $\sigma$ and $\tau$ to each chain in $\cC_{P'}$ yields a collection of $w(P')$ chains of the form $\sigma < \alpha_1 < \dotsb < \alpha_\ell < \tau$, where $\alpha_1, \dotsc, \alpha_\ell \in P'$.
    It is not hard to verify that the vector $(f_{\alpha\beta})_{\alpha < \beta}$ such that  $f_{\sigma\alpha_1} = \dotsb = f_{\alpha_\ell\tau} = 1$ for every chain in $\cC_{P'}$ and $f_{\alpha\beta} = 0$ otherwise is a feasible solution to (P) whose cost is
    \begin{equation}
      \label{eq:cost-P'}
      \sum_{\beta \in P'} z_\beta - m \cdot w(P').
    \end{equation}
    We conclude that the value of (P) is at least the maximum of~\eqref{eq:cost-P'} over all $P' \subseteq P$, that is, $M(z)$.

    Conversely, observe that every feasible solution $f = (f_{\alpha\beta})_{\alpha < \beta}$ to (P) corresponds to a collection $\cC_f$ of $|\{\alpha \in P : f_{\alpha\tau} = 1\}|$ chains of the form $\sigma < \alpha_1 < \dotsb < \alpha_\ell < \tau$, where $\alpha_1, \dotsc, \alpha_\ell \in P$ and $f_{\sigma\alpha_1} = \dotsb = f_{\alpha_\ell\tau} = 1$, such that every $\alpha \in P$ belongs to at most one chain in $\cC_f$.
    Let $P_f' \subseteq P$ be the set of elements of $P$ that appear on some chain in $\cC_f$ and note that the value of the cost function at $f$ is
    \[
      \sum_{\beta \in P_f'} z_\beta - m \cdot |\{\alpha \in P : f_{\alpha\tau} = 1\}| \le \sum_{\beta \in P_f'} z_\beta - m \cdot w(P_f') \le M(z),
    \]
    where the first inequality holds as $P_f'$ is a union of $|\{ \alpha \in P : f_{\alpha\tau} = 1\}|$ chains.
  \end{proof}

  Let us now write (P) in a more compact (but equivalent) form.
  Set $R \coloneqq \{(\alpha,\gamma) \in \Pc^2 : \alpha < \gamma\}$ and define the following two $|P| \times |R|$ matrices:
  \[
    A \coloneqq \bigl(\one_{\beta = \gamma} - \one_{\beta = \alpha}\bigr)_{\beta \in P, (\alpha,\gamma) \in R}
    \qquad
    \text{and}
    \qquad
    A' \coloneqq \bigl(\one_{\beta = \gamma}\bigr)_{\beta \in P, (\alpha,\gamma) \in R}.
  \]
  Writing $z_\tau \coloneqq -m$, we may reformulate (P) as follows:
  \begin{align*}
    \textbf{maximise} & \qquad \sum_{(\alpha,\beta) \in R} f_{\alpha\beta}z_\beta \\
    \textbf{subject to} & \qquad Af = 0, A'f \le 1, f \ge 0, f \in \mathbb{Z}.
  \end{align*}
  (Note that the constraint $f \le 1$ in the original formulation of (P) follows from the constraints $A'f \le 1$, $Af = 0$, and $f \ge 0$.)

  \begin{claim}
    The $2|P| \times |R|$ matrix $\begin{pmatrix}A \\ A'\end{pmatrix}$ is totally unimodular.
  \end{claim}
  \begin{proof}
    Let $B$ be an arbitrary square submatrix of $\begin{pmatrix}A \\ A'\end{pmatrix}$ and let $Q \subseteq P$ and $Q' \subseteq P$ be the rows of $A$ and $A'$, respectively, that appear in $B$.
    Let $B'$ be the matrix obtained from $B$ by the following row operations:
    For every $\beta \in Q \cap Q'$, subtract $A'_\beta$ from the row $A_\beta$.
    Note that every column of $B'$ contains at most two nonzero entries.
    Moreover, if a column has two nonzero entries, then these are $1$ and $-1$.
    Observe that this implies that $|\det B'| \in \{0,1\}$.
    Indeed, if every column of $B'$ has exactly two nonzero entries, then the rows of $B'$ sum to the zero vector.
    Othrewise, expanding $\det B'$ in a column with at most one nonzero entry shows that $|\det B'|$ is either zero or equals $|\det B''|$ for some matrix $B''$ whose columns have the same property as the columns of $B'$.
  \end{proof}

  Duality of linear programming and the well-known fact that total unimodularity of the constraint matrix of a linear program guarantees that its optimal value is attained on some integer vector (see \cite[Chapter~19]{Sch}) imply that the maximum of (P) equals the minimum of the following integer version of its dual (D):
  \begin{align*}
    \textbf{minimise} & \qquad \sum_{\beta \in P} g_\beta \\
    \textbf{subject to} & \qquad h_\gamma - h_\alpha + g_\gamma \ge z_\gamma \text{ for all $(\alpha,\gamma) \in R$} \\
    & \qquad g \in \mathbb{Z}_{\ge 0}^P \text{ and } h \in \mathbb{Z}^P,
  \end{align*}
  where $h_\sigma = h_\tau = g_\tau \coloneqq 0$ and, as before, $z_\tau \coloneqq -m$.

  Suppose that the common value of (P) and (D) is nonzero and let $g \in \mathbb{Z}_{\ge 0}^P$ and $h \in \mathbb{Z}^P$ be two feasible vectors achieving the minimum of (D).
  Choose an arbitrary $\delta \in P$ with $g_\delta \ge 1$.
  It is straightforward to verify that $g - e_\delta$ and $h$ are a feasible solution to (D) with $z$ replaced by $z - e_\delta$.
  Since clearly $\sum_{\beta \in P} (g-e_\delta)_\beta = \sum_{\beta \in P}g_\beta - 1$, the we may use Claim~\ref{claim:P-value-Mz} to conclude that
  \[
    M(z - e_\delta) \le M(z) - 1,
  \]
  as claimed.
\end{proof}

\end{document}